\documentclass[11pt,a4paper]{article}

\usepackage{comment}
\usepackage{graphicx}
\usepackage{amsmath}
\usepackage{amsfonts}
\usepackage{amssymb}
\usepackage{enumerate}
\usepackage{lscape}
\usepackage{longtable}
\usepackage{rotating}
\usepackage{color}
\usepackage{url}
\usepackage{rotating}
\usepackage{algpseudocode}
\usepackage[ruled]{algorithm}
\usepackage{pgf,tikz}
\usepackage{subfig}
\usepackage[titletoc]{appendix}
\usepackage{wrapfig}
\usepackage{adjustbox}
\usepackage{mathrsfs}
\usepackage{pgfplots}
\usetikzlibrary{arrows}
\usepackage{color}

\usepackage{bbm}

\usepackage{booktabs}
\usepackage{multirow}
\usepackage{mathtools}
\usepackage{makecell}

\usepackage[sort,numbers]{natbib}
\usepackage{eqparbox}%

\numberwithin{equation}{section}%

\usepackage[symbol]{footmisc}%

\newtheorem{theorem}{Theorem}[section]

\newtheorem{lemma}[theorem]{Lemma}

\newtheorem{assumption}{Assumption}[section]

\newenvironment{proof}[1][Proof]{\noindent \textbf{#1.} }{\hfill$\Box$\par\medskip}

\newcommand{\beqn}[1]{\begin{align*}\label{#1}}
\newcommand{\eeqn}{\end{align*}}

\definecolor{darkgreen}{rgb}{0,0.6,0}
\definecolor{aau2}{rgb}{0.0, 0.5, 0.69}
\definecolor{aau3}{rgb}{0.0, 0.53, 0.74}
\definecolor{aau4}{rgb}{0.0, 0.48, 0.65}
\definecolor{aau5}{rgb}{0.0, 0.45, 0.73}
\definecolor{rsap}{RGB}{130, 36, 51}
\definecolor{gsap}{RGB}{112, 164, 137}
\definecolor{tud}{rgb}{0.43,0.73,0.11}
\definecolor{verde}{rgb}{0.33,0.53,0.11}

\definecolor{ttffqq}{rgb}{0.0, 0.48, 0.65} %{rgb}{0.43,0.73,0.11}
\definecolor{ffqqqq}{rgb}{0.0, 0.5, 0.69} %{rgb}{1,0,0}
\usetikzlibrary{arrows}

\tikzstyle{decision} = [diamond, draw, fill=blue!20,
text width=4.5em, text badly centered, node distance=3cm, inner sep=0pt]
\tikzstyle{block} = [rectangle, draw, fill=blue!20,
text centered, rounded corners, minimum height=4em]
\tikzstyle{line} = [draw, -latex']
\tikzstyle{cloud} = [draw, ellipse,fill=red!20, node distance=3cm,
minimum height=2em]
\tikzstyle{cloud2} = [draw, ellipse,fill=green!20, node distance=3cm,
minimum height=2em]

\newcommand{\diam}{\Delta}
\newcommand{\Lphif}{L}

\newcommand{\Fm}{F_m}
\pgfplotsset{compat=1.18}
\begin{document}
\allowdisplaybreaks
\title{Stochastic block coordinate and function alternation for multi-objective optimization and learning}

	\author{
		T. H. Tran\thanks{Department of Industrial and Systems Engineering, Lehigh University, Bethlehem, PA 18015-1582, USA ({\tt hht320@lehigh.edu}).}
		\and
		L. N. Vicente\thanks{Department of Industrial and Systems Engineering, Lehigh University, Bethlehem, PA 18015-1582, USA ({\tt lnv@lehigh.edu}).}
	}
	
	\maketitle
Multi-objective optimization is central to many engineering and machine learning applications, where multiple objectives must be optimized in balance. While multi-gradient based optimization methods combine these objectives in each step, such methods require computing gradients with respect to all variables at every iteration, resulting in high computational costs in large-scale settings. In this work, we propose a framework that simultaneously alternates the optimization of each objective and the (stochastic) gradient update with respect to each variable block. Our framework reduces per-iteration computational cost while enabling exploration of the Pareto front by allocating a prescribed number of gradient steps to each objective. We establish rigorous convergence guarantees across several stochastic smooth settings, including convex, non-convex, and Polyak-Łojasiewicz conditions, recovering classical convergence rates of single-objective methods. Numerical experiments demonstrate that our framework outperforms non-alternating methods on 
multi-target regression and produces a competitive Pareto front approximation, highlighting its computational efficiency and practical effectiveness.

%%%%%%%%%%%%%%%%%%%%%%%%%%%%%%%%%%%%%%%%%%%%%%%

%%%%%%%%%%%%%%%%%%%%%%%%%%%%%%%%%%%%%%%%%%%%%%%

\section{Introduction}\label{sec:int}

Multi-objective optimization (MOO) arises in numerous real-world applications from engineering to finance where several conflicting criteria must be evaluated simultaneously~\cite{KMiettinen_2012,MEhrgott_2005,marler2004survey,NGunantara_2018}. Because these objectives potentially compete, the goal is to find a set of Pareto optimal solutions where no single objective can be improved without degrading another. Identifying this set of efficient points enables decision-makers to navigate complex trade-offs effectively. Furthermore, in many modern large-scale applications, these competing objectives are inherently stochastic, and the underlying decision variables exhibit a natural block structure~\cite{wright2015coordinate,shi2016primer,Nesterov2012EfficiencyOC}.
In this paper, we consider the following problem: 
\begin{align*}
    \min_{x\in \mathbb{R}^n} F(x) = (f_1(x), \dots, f_q(x))  = (\mathbb{E}[g_1(x,\xi)], \ldots, \mathbb{E}[g_q(x, \xi)]),
\end{align*}
where %$X \subset \mathbb{R}^n$, and
the decision variable $x \in \mathbb{R}^n$ is partitioned into $s$ disjoint blocks $x = (x_1, \dots, x_s)$, and each block $x_i$ has dimension $n_i$ with $\sum_{i=1}^s n_i = n$. 
We consider the MOO setting where each individual objective function $f_k$ has a Lipschitz continuous
gradient.
Moreover, each objective function is stochastic and computing precise gradients is typically either impossible or requires prohibitive computational resources. To overcome this limitation, our framework assumes access to an unbiased stochastic estimator $\nabla g_k(x, \xi)$ for the true gradient of each $f_k$, where $\xi$ represents a random variable.

%A multi-objective problem is considered smooth when all of its component objective functions $f_i$ are continuously differentiable. Throughout the convergence analysis of our proposed algorithms, we rely on this smoothness assumption for all objectives. Moreover, because each objective function incorporates inherent parametric randomness, exact gradients or subgradients are typically either unavailable or computationally prohibitive to evaluate. To overcome this limitation, our framework utilizes an unbiased stochastic estimator $\nabla g_i(x, \xi)$ for the true gradient of each $f_i$, where $\xi$ represents a random variable. 
% In this stochastic MOO setting, optimality is fundamentally established through the principle of Pareto dominance. Specifically, a decision vector $x$ is said to weakly dominate another vector $y$ if $f_i(x) \leq f_i(y)$ for all $i \in \{1, \dots, q\}$, and $F(x) \neq F(y)$. Consequently, a point $x \in X$ is formally defined as a Pareto optimal (or strictly non-dominated) solution if no other feasible point exists that weakly dominates it. The collection of all such non-dominated solutions is denoted by $\mathcal{P}$, and its mapping in the objective space constitutes the Pareto front, defined as $F(\mathcal{P}) = \{F(x) : x \in \mathcal{P}\}$.

\subsection{Literature review and motivation}
Existing MOO literature generally divides methods into two main categories: \textit{a priori} and \textit{a posteriori} methods, depending on when the decision-maker's preferences are incorporated into the optimization process. 
In MOO, the \textit{a priori} approach integrates decision-making preferences beforehand to transform the multi-objective problem into a simpler single-objective one, which can then be solved using single-objective optimization algorithms. Common strategies in this category include the weighted-sum method~\cite{SGass_TSaaty_1955}, which assigns non-negative weights to combine the objectives into a single convex linear combination; the $\epsilon$-constrained method~\cite{YVHaimes_1971}, which minimizes one primary objective while imposing upper bounds on the others; and the utility function method~\cite{EKBrowning_MAZupan_2020, KMiettinen_2012}, which employs a scalar function to mathematically quantify the overall preference between different objective vectors.
In contrast, \textit{a posteriori} methods are designed to generate a comprehensive or representative set of Pareto optimal solutions from which the decision-maker later selects the best option. These methods typically update a list of candidate solutions using either metaheuristics (such as evolutionary algorithms~\cite{SBechikh_RDatta_AGupta_2016, CCCoello_2006}) or rigorous descent mechanisms that compute common descent directions for all objectives simultaneously~\cite{ALCustodio_etal_2011, JADesideri_2012, EHFukuda_LMGDrummond_2014}. The alternating optimization algorithms proposed in this paper belong, in a certain sense, to both categories. % Rather than seeking a common descent direction to decrease all objectives simultaneously, our approach optimizes them sequentially. As we will demonstrate, these alternating algorithms implicitly optimize a specific convex linear combination of the objective functions.

In smooth, deterministic MOO, the multi-gradient algorithm applies the steepest descent step by computing the minimal-norm convex combination of gradients from individual functions. Therefore, a natural approach for stochastic MOO is the stochastic multi-gradient algorithm~\cite{MQuentin_PFabrice_JADesideri_2018,SLiu_LNVicente_2019}, where the gradients in the minimal-norm convex combination are replaced by its stochastic estimators. It is shown that the stochastic multi-gradient is a biased estimator of the true multi-gradient of the weighted function, and one can impose a bound on the amount of bias estimation by dynamic sampling~\cite{SLiu_LNVicente_2019}. With that assumption, 
\cite{SLiu_LNVicente_2019} establishes sublinear convergence rates that are similar to the ones known for single-objective optimization:~$\mathcal{O}(1/T)$ in the strongly convex case and~$\mathcal{O}(1/T^{1/2})$ in the convex one, where $T$ is the number of iterations.
While~\cite{SLiu_LNVicente_2019} only considers convex and strongly convex settings, follow-up work
\cite{SZhou_etal_2022,HFernando_etal_2023,LChen_HFernando_YYing_TChen_2023,HFernando_etal_2024} investigates multi-objective stochastic gradient methods in non-convex case, using different strategies to address the bias in the stochastic multi-gradient.
Without variance reduction,~\cite{LChen_HFernando_YYing_TChen_2023} shows the convergence rate of $\mathcal{O}(1/T^{1/2})$ for their proposed stochastic algorithm with double sampling in the non-convex case. 
% by a correlation reduction technique on the weight vector~$\lambda$ and updating it using the weight from the previous iteration. Meanwhile,~\cite{HFernando_etal_2023} proposes keeping track of variables to approximate the true gradient of each individual function to reduce the bias. Unlike these references,~\cite{LChen_HFernando_YYing_TChen_2023} does not require bounded function value assumptions, and utilizes a double sampling technique to obtain unbiased estimates of the Hessian matrix appearing in a modified version of subproblem~\eqref{prob:SMG}. The most recent work~\cite{HFernando_etal_2024} employ a variance reduction technique and improves the convergence for non-convex settings to $\mathcal{O}(1/k^{2/3})$, which matches the best known rate for single-objective optimization.

In single-objective optimization, block coordinate descent refers to a family of algorithms that update one block of the decision variable at each iteration~\cite{wright2015coordinate,shi2016primer,ABeck_LTetruashvili_2013,JLiu_etal_2014}. This approach is attractive when the variable naturally decomposes into blocks of coordinates and when full-gradient steps are expensive. The update of block variables can be done in either a Gauss-Seidel sequential style or a Gauss-Jacobi parallel style~\cite{ABeck_LTetruashvili_2013,JLiu_etal_2014,cai2023cyclic}. %The Gauss-Seidel approach updates the parameters using the most recent values of all other blocks, while the Jacobi style updates simultaneously based on the values from the previous iteration, which is more suitable for distributed computing. 
The order of such block variables can be chosen using a cyclic fashion, a random permutation of all blocks, or a randomized selection, where the new block is chosen uniformly at random. 
The convergence of deterministic BCD has been analyzed extensively in the literature~\cite{luo1992convergence,blockGauss-Seidel,razaviyayn2013unified,Nesterov2012EfficiencyOC,ABeck_LTetruashvili_2013,wen2012block}, where~\cite{Nesterov2012EfficiencyOC, ABeck_LTetruashvili_2013} consider general smooth convex settings for cyclic BCD and its randomized version, and achieve a convergence rate of $\mathcal{O}(1/T^{1/2})$.
The work~\cite{xu2015blockstochasticgradientiteration} pioneered the generalization of BCD for stochastic, single-objective optimization with theoretical results for both convex and non-convex regimes. The update order of the variable blocks can be fixed or shuffled, and the algorithm achieves sublinear convergence rates for convex and strongly convex  cases (namely $\mathcal{O}(1/T^{1/2})$ and $\mathcal{O}(1/T)$, respectively), which match the rates of stochastic gradient descent. However,~\cite{xu2015blockstochasticgradientiteration} only shows asymptotic convergence in the non-convex setting.  Later work~\cite{cuevas2017cyclic,jiang2025stochastic,yu2019zeroth,driggs2021stochastic,yang2019inexact,cai2023cyclic} expands this research direction, where~\cite{cai2023cyclic} provides non-asymptotic convergence guarantees for cyclic BCD methods in non-convex settings, however, utilizing a variance reduction technique.

\subsection{Contributions of this paper}

In this paper, we develop a stochastic block coordinate and function alternation algorithm (named Block-SMOO) for stochastic MOO. Block-SMOO has a two-loop structure where the outer loop iterates over the blocks of decision variables, while the inner loop applies a specific number of stochastic gradient descent steps for each objective function.
Our paper analyzes the theoretical convergence of Block-SMOO in various standard smooth settings, which recovers the classical convergence rates of standard single-objective stochastic gradient algorithms. Notably, as each update of our method only involves one objective function and one block of variables, it incurs a lower computational cost per iteration compared to standard full-gradient approaches like the weighted-sum methods~\cite{SGass_TSaaty_1955,SLiu_LNVicente_2019}. 

Block-SMOO differs from existing stochastic MOO literature, where the most closely related work is the stochastic alternating bi-objective algorithm~\cite{SLiu_LNVicente_2023}. While~\cite{SLiu_LNVicente_2023} only alternates function minimization, our method considers the simultaneous alternation of both functions and variables. Furthermore,~\cite{SLiu_LNVicente_2023} focuses on the specific case of two objective functions in convex and strongly convex settings, whereas our theory accommodates a general number of objective functions and provides theoretical guarantees across general convex, non-convex, and Polyak-Łojasiewicz (PL) conditions. Finally, our proof technique fundamentally differs from~\cite{SLiu_LNVicente_2023} as we establish convergence by utilizing a recursive descent bound for the objective function value, while the approach in~\cite{SLiu_LNVicente_2023} relies on an application of the Intermediate Value Theorem to aggregate the separate optimization steps.

In addition, our paper addresses a critical gap in the block coordinate descent literature for stochastic MOO, as no prior work has analyzed the simultaneous alternation of functions and variable blocks. The closely related BCD work~\cite{PCheng_etal_2024} employs scalarization to combine objectives into a single weighted function prior to optimization, rather than treating them separately, but it lacks a theoretical analysis for its proposed method. Our work resolves this by maintaining separated objectives and providing solid theory.

Meanwhile, our numerical experiments demonstrate the practical advantages of our method. When applied to compute an equally weighted Pareto optimal solution, Block-SMOO outperforms standard baseline approaches on multi-target regression tasks. We also validate the Block-SMOO's ability to approximate the full Pareto front, showing that it produces a trade-off surface structurally similar to the traditional weighted-sum approach but with a higher accuracy.
The remainder of this paper is organized as follows. In Section~\ref{sec:alg}, we provide a detailed description of the proposed Block-SMOO algorithm. We then present, in Section~\ref{sec:ana}, the theoretical convergence rate analysis of our method for the smooth, non-convex case.  In addition, we derive convergence results for the smooth convex case and the case where the functions satisfy the Polyak-Łojasiewicz (PL) condition. In Section~\ref{sec:exp}, we report the numerical performance of our algorithm on multi-target regression tasks, demonstrating its practical performance. The paper is concluded with some final remarks in Section~\ref{sec:con}.

\section{Stochastic block coordinate and function alternation algorithm}\label{sec:alg}

Described in Algorithm~\ref{alg_stoc}, our Block-SMOO operates by applying at each iteration a certain number of stochastic (partial) gradient steps to each individual function, where the order of those steps (meaning the selection of the individual functions) follows a certain user-specified sequence. 

Firstly, the algorithm is initialized with a budget of total gradient steps to be applied at each iteration, denoted by $p$. 
To specify which of the $q$ individual functions is selected for a gradient step in the order $0, \dots, p-1$, we introduce an index mapping $\pi : \{0, \dots, p-1\} \to \{1, \dots, q\}$. 
For any gradient step $j \in \{ 0, \dots, p-1\}$, the value $\pi(j)$ identifies which individual function is selected for such a step.
Next, we introduce a frequency vector $m \in \mathbb{N}^q$, where each component~$m_k$ specifies the number of gradient steps that is allocated to the corresponding objective function~$f_k$ within a single cycle from 0 to $p-1$. Consequently, the predetermined budget $p$ is also the sum of these individual frequencies,  $p = \sum_{k=1}^q m_k$.
One can then see that the index mapping $\pi$ %is %$\pi : \{0, \dots, p-1\} \to \{1, \dots, q\}$ is later 
%such that the cardinality of its preimage for objective $f_k$ satisfies $|\pi^{-1}(k)| = m_k$. 
is such that %$|\pi^{-1}(k)| = m_k$, 
the cardinality of its preimage for the objective $f_k$ is $m_k$, meaning $|\pi^{-1}(k)| = m_k$.
This cycle of gradient steps naturally introduces the following weighted-sum function, for which the weights are identified through the vector $m$:
\begin{align*}
    \Fm (\cdot) := \sum_{k=1}^q \frac{m_k}{p} f_k(\cdot) = \frac{1}{p}\sum_{j=0}^{p-1}f_{\pi(j)} (\cdot),
\end{align*}
where the second equality holds for every index mapping $\pi$ due to its definition. The Block-SMOO algorithm aims to minimize this weighted-sum function. 
To illustrate, consider a bi-objective setting ($q=2$) with a frequency vector $m = (5, 15)$, corresponding to a budget of $p = 20$. This configuration allocates 25\% of the steps to the first objective and 75\% to the second. As an example, one valid index mapping $\pi$ is the contiguous sequence 
\begin{align*}
    (\underbrace{1, \dots, 1}_{5 \text{ times}}, \underbrace{2, \dots, 2}_{15 \text{ times}}),
\end{align*}
and any arbitrary permutation of this sequence satisfies our algorithmic construction. As a result, the Block-SMOO will minimize the weighted-sum function $F_m = 0.25 f_1 + 0.75 f_2$. 

By varying the convex linear weights in $F_m$, one is guaranteed to find all Pareto solutions when the functions are convex~\cite{KMiettinen_2012,MEhrgott_2005}.
Therefore, by changing the frequencies $m_k$ for each function $f_k$ in the beginning of Block-SMOO, one can capture the trade-off between all objective functions. 
% We give 
% a detailed description of our stochastic multi-objective optimization method in Algorithm~\ref{alg_stoc}. 

\begin{algorithm}[ht]
\caption{Block-SMOO Algorithm} 
\label{alg_stoc} 
\begin{algorithmic}[1]
\item \textbf{Input:} Initial point $x^{0,0,0}$ and a step size sequence $\{\alpha_t\}$. \\
The budget $p$ and the frequency vector $m \in \mathbb{N}^q$, $p = \sum_{k=1}^q m_k$.
%Choose an index mapping $\pi: \{0, \dots, p-1\} \to \{1,\dots, q\}$.
\item {\bf for} $t = 0, \ldots, T-1$ {\bf do}
\item \quad Choose a permutation $\sigma$ of the block index set $\{1,\dots, s\}$.
\item \quad {\bf for} $i = 0, \ldots, s-1$ {\bf do}
\item \quad \quad Choose an index mapping $\pi: \{0, \dots, p-1\} \to \{1,\dots, q\}$ satisfying that $|\pi^{-1}(k)| = m_k$.
\item \quad \quad {\bf for} $j = 0, \ldots, p-1$ {\bf do}
\item \quad \quad \quad Generate a stochastic gradient $ \nabla_{\sigma(i+1)} g_{\pi(j)} (x^{t,i,j}, \xi^{t,i,j})$ to estimate $\nabla_{\sigma(i+1)} f_{\pi(j)} (x^{t,i,j})$. 
\item \quad \quad \quad Update $x_{\sigma(i+1)}^{t,i,j+1} = x_{\sigma(i+1)}^{t,i,j} - \alpha_t \nabla_{\sigma(i+1)} g_{\pi(j)} (x^{t,i,j}, \xi^{t,i,j})$.
\item \quad \quad \quad Set $x_{k}^{t,i,j+1} = x_{k}^{t,i,j}$ for all $k \neq \sigma(i+1)$.
\item \quad \quad Set $x^{t,i+1,0} = x^{t,i,p}$. 
\item \quad Set $x^{t+1,0,0} = x^{t,s,p}$.
\par\vspace*{0.1cm}
\end{algorithmic}
\end{algorithm}

After initialization, for each outer iteration $t$ of Algorithm~\ref{alg_stoc}, a permutation $\sigma$ of the block index set $\{1, \dots, s\}$ is chosen to randomize the order of the variable blocks.
Then, for each selected block $\sigma(i+1)$, it constructs an index mapping $\pi$
as described above. The algorithm then proceeds to update the chosen block $\sigma(i+1)$ using a stochastic gradient step of the individual function determined by $\pi(j)$. This process is repeated for every mapped sequence of objective ($j = 0, \dots, p-1$) and every block ($i = 0, \dots, s-1$), after which the algorithm advances to the next outer iteration. We note that our algorithm and convergence analyses allow for different permutations of $\sigma$ and $\pi$ across iterations, however, we will drop the indices $t, i$ throughout our paper for simplicity.

It is worth noting that Block-SMOO generalizes several well-known optimization algorithms. First, in the single-objective case, when $q=1$, our algorithm naturally reduces to the stochastic Block Coordinate Descent (BCD) method \cite{xu2015blockstochasticgradientiteration}. Second, when $s=1$, i.e., the decision variable is treated as a single block, the algorithm simplifies to an alternating function method for MOO (matching what was proposed in~\cite{SLiu_LNVicente_2023} for two objectives). Also, when~$s=1$, one can see our approach as a way to replicate the final effect of the weighted-sum method~\cite{SGass_TSaaty_1955}. Finally, in the most basic case where both $s=1$ and $q=1$, Block-SMOO recovers the classical stochastic gradient descent algorithm for single-objective optimization.

\section{Convergence analysis}\label{sec:ana}
Our convergence analysis starts with the non-convex and convex cases where we show the convergence rates $\mathcal{O}(1/T^{1/2})$ for both cases. We follow up with the setting of Polyak-Łojasiewicz condition, where the  convergence rate improves to $\mathcal{O}(1/T)$. 
\subsection{The non-convex case}
We first assume that all of the objective functions are Lipschitz smooth, and the weighted-sum function $F_m$ has a lower bound, which are standard assumptions in the optimization literature~\cite{bertsekas1997nonlinear}. 
\begin{assumption}[$\Lphif$-smoothness and Lower Bound]\label{assump:L}
For every $k \in \{ 1, \dots, q\}$, the function 
$f_k(x)$ is $\Lphif$-smooth, i.e., there exists a constant $\Lphif>0$ such that, $\forall x, x' \in \mathbb{R}^n$,
\begin{align*}
\|\nabla f_k(x) - \nabla f_k(x')\| \le \Lphif \|x - x'\|.
\end{align*}
%Trang: check proof, maybe this is L smooth wrt each block
% \item For every $j \in \{ 1, \dots, q\}$, the function $f_j(x)$ is $\Lf$-continuous, i.e., there exists a constant $\Lf>0$ such that, $\forall x, x' \in \mathbb{R}^n$,
% \begin{align*}
% | f_j(x) -  f_j(x')| \le \Lf \|x - x'\|.
% \end{align*}
% \end{enumerate}
In addition, we %denote $\Fm (\cdot) := \frac{1}{p}\sum_{j=0}^{p-1}f_{\pi(j)} (\cdot)$ and 
assume that $\Fm$ is bounded from below, i.e., $F^* := \inf_{x\in \mathbb{R}^n} \Fm(x) \in \mathbb{R}$. 

\end{assumption}
For the stochastic gradients $\nabla g_k(x,\xi), k \in \{ 1, \dots, q\}$, generated with random variable~$\xi$, we use~$\mathbb{E}_{\xi}[\cdot]$ to denote the conditional expectation taken with respect to $\xi$. In addition to Assumption~\ref{assump:L}, we impose the following two classical assumptions in stochastic approximation.  
\begin{assumption}[Unbiasedness and Bounded Gradient]\label{assump:conds}
For every $k \in \{ 1, \dots, q\}$, every $x \in \mathbb{R}^n$,
and every realization of $\xi$, the stochastic gradient $\nabla g_k(x,\xi)$ satisfies the following conditions
\begin{itemize}
\item[(a)] %Unbiasedness: 
$\mathbb{E}_{\xi} \left[\nabla g_k(x,\xi)\right] = \nabla f_k(x)$.
\item[(b)] %Bounded gradient: 
$\mathbb{E}_{\xi}[\|\nabla g_k(x,\xi)\|^2] \leq \sigma^2$, where $\sigma^2$ is a constant. 
\end{itemize} 
\end{assumption}

The above assumptions are commonly used for analyzing gradient-type methods~\cite{LBottou_FECurtis_JNocedal_2018,shalev2007pegasos,recht2011hogwild,nemirovski2009robust}, which guarantees that the stochastic gradient is unbiased and bounded. Note that the Block-SMOO algorithm does not need access to a stochastic estimator of the full gradient, and as a result, Assumption~\ref{assump:conds} only needs to hold for each partial stochastic gradient. We assume the full gradient for ease of notations. 
% \begin{definition}\label{def:minimizer}
% \textbf{Notations.}
% We denote the following: 
% \begin{align*}
%     \Fm (\cdot) &:= \frac{1}{p}\sum_{j=0}^{p-1}f_{\pi(j)} (\cdot), \text{ and } 
%     x^* := \arg\min_x \Fm (x).
% \end{align*}
% In addition, for every $j \in \{ 1, \dots, q\}$,
% \begin{align*}
%     x_j^* &:= \arg\min_{x} f_j(x) , \text{ and } 
%     \Fm^{**} := \frac{1}{p}\sum_{j=0}^{p-1} f_{\pi(j)}(x_{\pi(j)}^*).
% \end{align*}

% \end{definition}

Our main result below shows a bound for the expected squared norm gradient of the weighted-sum function $F_m$, which proves the sublinear convergence rate of $\mathcal{O}(1/T^{1/2})$. 
\begin{theorem}
\label{thm:non-convex}
Let Assumptions \ref{assump:L} and \ref{assump:conds} hold.
Let the step size be $\alpha_t = {1}/{\sqrt{T}}$ and $\bar{x}$ be the output of Algorithm~\ref{alg_stoc}. We  sample $\bar{x}$ uniformly at random from the set $\{x^{0,0,0}, \dots ,x^{T-1,0,0}\}$  with probability $\mathbb{P} (\bar{x} = x^{t,0,0}) = \frac{1}{T}$ for every $t$. %$t = 0, \dots, T-1$. %Let \dots be the output of Algorithm~\ref{alg_stoc}, 
We have
\begin{align*}
%\frac{1}{T}\sum_{t=0}^{T-1}  \mathbb{E} \left[\left\| \nabla \Fm(x^{t,0,0})  \right\|^2 \right] 
\mathbb{E} \left[\left\| \nabla \Fm(\bar{x})  \right\|^2 \right] 
&\leq  \frac{2}{\sqrt{T}} \left(\mathbb{E} [\Fm(x^{0,0,0})- F^*]  + \Lphif \sigma^2 p^2\left[s + \frac{\Lphif s^3}{3}\right]\right).
\end{align*}

\end{theorem}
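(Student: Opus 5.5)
The plan is to treat one full outer iteration $t$ as a single perturbed gradient step on $\Fm$, and to derive a one-epoch descent inequality of the form
\begin{align*}
\mathbb{E}[\Fm(x^{t+1,0,0})] \le \mathbb{E}[\Fm(x^{t,0,0})] - \frac{\alpha_t p}{2}\,\mathbb{E}\|\nabla \Fm(x^{t,0,0})\|^2 + \alpha_t^2 \Lphif\sigma^2 p^2\Big[s + \frac{\Lphif s^3}{3}\Big]\cdot(\text{const}).
\end{align*}
Summing over $t=0,\dots,T-1$, telescoping, using $\Fm\ge F^*$, dividing by $T\alpha p/2$ with $\alpha=1/\sqrt T$, and interpreting the average as $\mathbb{E}\|\nabla\Fm(\bar x)\|^2$ gives the claim. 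If the natural constant ends up being $p$ rather than $p^2$, or $2/(\alpha p T)$ rather than $2/\sqrt T$, I will simply bound $p\ge1$ to reach the stated form.

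\textbf{Step 1 (epoch as aggregated step).} Write $x^{t+1,0,0}-x^{t,0,0} = -\alpha_t \sum_{i=0}^{s-1}\sum_{j=0}^{p-1} U_{\sigma(i+1)}\nabla_{\sigma(i+1)} g_{\pi(j)}(x^{t,i,j},\xi^{t,i,j})$, where $U_b$ embeds block $b$. Because $\sigma$ is a permutation and each $\pi$ hits $f_k$ exactly $m_k$ times, we have the identity
\[
\sum_{i}\sum_j U_{\sigma(i+1)}\nabla_{\sigma(i+1)} f_{\pi(j)}(x^{t,0,0}) = p\,\nabla\Fm(x^{t,0,0}).
\]
So the update equals $-\alpha_t p\nabla\Fm(x^{t,0,0})$ plus a martingale-difference noise term plus a "staleness" error $e_t=\sum_{i,j}U_{\sigma(i+1)}[\nabla_{\sigma(i+1)} f_{\pi(j)}(x^{t,i,j})-\nabla_{\sigma(i+1)} f_{\pi(j)}(x^{t,0,0})]$.

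\textbf{Step 2 (descent lemma).} Apply $\Lphif$-smoothness of $\Fm$ (inherited from the $f_k$) at $x^{t,0,0}$. The linear term gives $-\alpha_t p\|\nabla\Fm\|^2$. The noise inner product vanishes in conditional expectation by Assumption~\ref{assump:conds}(a), since each $\xi^{t,i,j}$ is drawn after $x^{t,i,j}$ is fixed. The quadratic term $\frac{\Lphif}{2}\alpha_t^2\|\sum_{i,j}\cdot\|^2$ is bounded by Cauchy--Schwarz and (b) by $\frac{\Lphif}{2}\alpha_t^2 (sp)^2\sigma^2$. Block orthogonality may improve this to roughly $s p^2\sigma^2$, which is where the $s$ term comes from. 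The cross term $-\alpha_t\langle\nabla\Fm, e_t\rangle$ is handled with Young's inequality: it is at most $\frac{\alpha_t p}{2}\|\nabla\Fm\|^2+\frac{\alpha_t}{2p}\|e_t\|^2$, which leaves the $-\frac{\alpha_t p}{2}\|\nabla \Fm\|^2$ term.

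\textbf{Step 3 (staleness bound, the main obstacle).} By smoothness, $\|e_t\|\le \Lphif\sum_{i,j}\|x^{t,i,j}-x^{t,0,0}\|$. The displacement after $i$ blocks and $j$ inner steps is a sum of $ip+j$ stochastic steps, so $\mathbb{E}\|x^{t,i,j}-x^{t,0,0}\|^2\le \alpha_t^2 (ip+j)^2\sigma^2$. Summing over blocks, $\sum_i i^2\approx s^3/3$, which produces the $\Lphif s^3/3$ factor multiplied by $\alpha_t^3$. I expect this to give a term like $\alpha_t^3\Lphif^2\sigma^2p^{c}s^3/3$. To match the stated $\alpha_t^2$ scaling I will use $\alpha_t\le1$. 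The delicate part is organizing the $p$ and $s$ exponents and the Young weights so that the constants collapse to exactly $\Lphif\sigma^2p^2[s+\Lphif s^3/3]$ after division by $\alpha_t p/2$. For this I will keep the per-step gradient bound in the form $\mathbb{E}\|\nabla g\|^2\le\sigma^2$ throughout, and use Jensen on the double sums rather than expanding cross products.
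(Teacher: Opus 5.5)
Your route works, but it is not the paper's route, and Step 3 as you describe it would not reach the stated constants.

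\textbf{How the two routes differ.} The paper applies the descent lemma at every inner step $(t,i,j)$, so its quadratic term is simply $sp\,\alpha_t^2\Lphif\sigma^2/2$. It then handles gradient drift in two stages. First, it compares $x^{t,i,j}$ with the block start $x^{t,i,0}$ through a first-order error term $A^t$, bounded using $\|\nabla\Fm\|,\|\nabla f_k\|\le\sigma$. Second, it compares $x^{t,i,0}$ with $x^{t,0,0}$ via $-a^2\le-\tfrac12 b^2+(a-b)^2$. There the permutation reassembles $\sum_i\|\nabla_{\sigma(i+1)}\Fm(x^{t,0,0})\|^2=\|\nabla\Fm(x^{t,0,0})\|^2$, and the squared drift produces the $\alpha_t^3\Lphif^2\sigma^2p^2s^3/3$ term.

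You instead apply one epoch-level descent lemma with the gradient frozen at $x^{t,0,0}$. Because $\nabla\Fm(x^{t,0,0})$ is $\mathcal{F}_{t,i,j}$-measurable, the noise cross term vanishes exactly. All staleness then enters only through a second-order Young term, with no first-order error term at all. You also keep the factor $p$ in $-\tfrac{\alpha_t p}{2}\|\nabla\Fm\|^2$, whereas the paper's chain ends with only $-\tfrac12\alpha_t\|\nabla\Fm\|^2$. (Its rewriting of the first term of $A^t$ drops a factor $p$, which is harmless since $p\ge 1$.) The price of your route is that you must control squares of sums over an entire epoch.

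\textbf{What needs fixing.} If you apply Cauchy--Schwarz/Jensen over all $sp$ summands, as you propose, you get
$\mathbb{E}\|x^{t+1,0,0}-x^{t,0,0}\|^2\le\alpha_t^2s^2p^2\sigma^2$ and $\mathbb{E}\|e_t\|^2\le\Lphif^2\alpha_t^2\sigma^2s^4p^4/3$.
After dividing by $\alpha_t p/2$ and averaging, these give $\Lphif s^2p\sigma^2/\sqrt T$ and $\Lphif^2\sigma^2s^4p^2/(3T)$. These exceed the stated $2\Lphif sp^2\sigma^2/\sqrt T$ when $s>2p$, and the stated $2\Lphif^2\sigma^2p^2s^3/(3\sqrt T)$ when $T<s^2/4$. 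Using $\alpha_t\le 1$ cannot remove an extra power of $s$.

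The remedy is to use block orthogonality in both places, not as an optional refinement. Since the block order is a permutation, the vectors $U_{\sigma(i+1)}v_i$ for different $i$ lie in mutually orthogonal coordinate subspaces, so $\|\sum_i U_{\sigma(i+1)}v_i\|^2=\sum_i\|v_i\|^2$. You then apply Cauchy--Schwarz only over the $p$ inner steps of each block. This gives $\mathbb{E}\|x^{t+1,0,0}-x^{t,0,0}\|^2\le\alpha_t^2sp^2\sigma^2$. Combined with $\mathbb{E}\|x^{t,i,j}-x^{t,0,0}\|^2\le(ip+j)^2\alpha_t^2\sigma^2$ and $\sum_{k=0}^{sp-1}k^2\le(sp)^3/3$, it also gives $\mathbb{E}\|e_t\|^2\le p\Lphif^2\sum_{i,j}(ip+j)^2\alpha_t^2\sigma^2\le\Lphif^2\alpha_t^2\sigma^2s^3p^4/3$. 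Your epoch inequality then reads
\begin{align*}
\mathbb{E}[\Fm(x^{t+1,0,0})]\le\mathbb{E}[\Fm(x^{t,0,0})]-\frac{\alpha_t p}{2}\mathbb{E}\|\nabla\Fm(x^{t,0,0})\|^2+\frac{\Lphif}{2}\alpha_t^2sp^2\sigma^2+\frac{\Lphif^2}{6}\alpha_t^3\sigma^2s^3p^3 .
\end{align*}
Averaging with $\alpha_t=1/\sqrt T$ yields
\begin{align*}
\mathbb{E}\|\nabla\Fm(\bar x)\|^2\le\frac{2\,\mathbb{E}[\Fm(x^{0,0,0})-F^*]}{p\sqrt T}+\frac{\Lphif sp\sigma^2}{\sqrt T}+\frac{\Lphif^2s^3p^2\sigma^2}{3T},
\end{align*}
which is termwise no larger than the bound in the theorem. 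With this change your epoch-level argument is a valid and slightly sharper alternative to the paper's per-step argument.
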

% Under the assumption that each individual objective function $f_k$ is smooth and that the stochastic gradients are bounded and unbiased estimators, we can establish that $\Fm$ is also $\Lphif$-smooth. 
We present the proof of Theorem~\ref{thm:non-convex} in Section~\ref{sec:non-convex} of the Appendix, which we briefly summarize below. Our non-convex analysis 
%The proof of Theorem~\ref{thm:non-convex} 
begins by bounding the weighted-sum function $\Fm(x^{t,i,j+1})$ by $\Fm(x^{t,i,j})$, i.e., 
\begin{align*}
\Fm(x^{t,i,j+1}) &\leq \Fm(x^{t,i,j}) + \nabla \Fm(x^{t,i,j})^\top (x^{t,i,j+1} - x^{t,i,j}) + \frac{\Lphif}{2} \|x^{t,i,j+1} - x^{t,i,j}\|^2,
\end{align*}
where we use the fact that $\Fm$ is also $\Lphif$-smooth. This observation, along with applications of Assumption~\ref{assump:conds} leads to the following recursive bound for the expected function value 
\begin{align*}%\label{eq:bound_non-convex}
\mathbb{E} [\Fm(x^{t,s,p})]
&\leq \mathbb{E} [\Fm(x^{t,0,0})] - \alpha_t  \mathbb{E} \left[\sum_{i=0}^{s-1}\left\| \nabla_{\sigma(i+1)} \Fm(x^{t,i,0})  \right\|^2 \right]   + ps\alpha_t^2 \frac{\Lphif}{2} \sigma^2 + \alpha_t \mathbb{E} [A^t]. 
\end{align*}
As the gradients are evaluated at intermediate points $x^{t,i,j}$ rather than $x^{t,i,0}$, in this bound, we introduce an error term $A^t$ that considers these deviations,
\begin{align*}
&\left|\sum_{i=0}^{s-1}\nabla_{\sigma(i+1)} \Fm(x^{t,i,0})^\top \nabla_{\sigma(i+1)} \Fm(x^{t,i,0}) -\sum_{i=0}^{s-1}\sum_{j=0}^{p-1}\nabla_{\sigma(i+1)} \Fm(x^{t,i,j})^\top  \nabla_{\sigma(i+1)} f_{\pi(j)} (x^{t,i,j})\right|,
\end{align*}
and we show that $\mathbb{E}[A^t]  \leq  \alpha_t \Lphif \sigma^2 s p(p-1)$ using 
$\Lphif$-smoothness and bounded gradient assumptions.

Substituting this error bound for $A^t$ back into the recursive bound and employing a relation between $\nabla \Fm(x^{t,i,0}) $ and $\nabla \Fm(x^{t,0,0})$, we arrive at the following descent bound for the algorithm progress per outer iteration:
% \textcolor{blue}{
% [I will add a summary of proof until it gets to the descent bound (1) below, as we use it for the strongly convex/ PL result]}
\begin{align}\label{eq:main_bound2}
\mathbb{E} [\Fm(x^{t+1,0,0})]
&\leq \mathbb{E} [\Fm(x^{t,0,0})]  -\frac{1}{2} \alpha_t \mathbb{E} \left[\left\| \nabla \Fm(x^{t,0,0})  \right\|^2 \right]   + \alpha_t^2 \Lphif \sigma^2 p^2\left[s + \frac{\Lphif \alpha_t  s^3}{3}\right],
\end{align}
where $x^{t+1,0,0} = x^{t,s,p}$ from Algorithm~\ref{alg_stoc}. 
Applying the fixed step size $\alpha_t = \frac{1}{\sqrt{T}}$ and averaging for all $t$, we obtain
\begin{align*}
\frac{1}{T}\sum_{t=0}^{T-1}  \mathbb{E} \left[\left\| \nabla \Fm(x^{t,0,0})  \right\|^2 \right] 
&\leq  \frac{2}{\sqrt{T}} \left(\mathbb{E} [\Fm(x^{0,0,0})- F^*]  + \Lphif \sigma^2 p^2\left[s + \frac{\Lphif s^3}{3}\right]\right),
\end{align*}
where the final bound of Theorem~\ref{thm:non-convex} follows from the definition of $\bar{x}$. We note that this practice of choosing random output point $\bar{x}$ is standard in the literature of stochastic optimization \cite{ghadimi2013stochastic}.

Our convergence rate for Block-SMOO recovers the classical convergence rate $\mathcal{O}(1/T^{1/2})$ of stochastic gradient descent for single-objective optimization \cite{ghadimi2013stochastic}.
We note that for MOO, the  work~\cite{SLiu_LNVicente_2019} does not consider the non-convex setting.  Meanwhile, in single-objective optimization,~\cite{xu2015blockstochasticgradientiteration} only considers asymptotic theoretical non-convex results for their stochastic BCD algorithm.  
% [Comments on the rate of MOO alternating functions and SGD]
The stochastic setting in~\cite{xu2015blockstochasticgradientiteration} is somewhat similar to our Assumption~\ref{assump:conds}, however, their non-convex  theory assumes a vanishing variance of the stochastic gradient. 

\subsection{The convex case}

Before presenting the convex theoretical analysis, we impose the additional assumptions as follows. 
\begin{assumption}[Convexity and Bounded Iterations]\label{assump:convex}
We assume that $f_k(x)$ is convex for every $k \in \{ 1, \dots, q\}$ and the solution 
$x^* := \arg\min_x \Fm (x)$ exists. 
%For every $j \in \{ 1, \dots, q\}$, 
% $f_j(x)$ is convex. %, i.e., 
% $\forall x, x' \in \mathbb{R}^n$,
% \begin{align*}
% f_j(x) - f_j(x') \ge \langle \nabla f_j(x'), x - x' \rangle.
% \end{align*}
In addition, we assume that 
$\left\|x^{t,0,0}- x^* \right\| \leq \diam$, for every outer iteration $t$ of Algorithm~\ref{alg_stoc}. 
\end{assumption}

% \begin{remark} 
The iterates generated by Algorithm~\ref{alg_stoc} are assumed to remain bounded. This is a standard assumption widely adopted in the literature for convex optimization and block coordinate descent methods~\cite{Nesterov2012EfficiencyOC,ABeck_LTetruashvili_2013,xu2015blockstochasticgradientiteration,SLiu_LNVicente_2019}. We note that instead of relying on this assumption, the algorithm can be modified to include a projection step onto a bounded convex set after each update. 
The first paragraph of our convex analysis can be easily adapted using the non-expansiveness property of orthogonal projections.

We are now ready to present our convex theorem, which bounds the expected optimality gap for the weighted-sum function $F_m$. 
\begin{theorem}
\label{thm:convex}
Let Assumptions \ref{assump:L}, \ref{assump:conds}, and \ref{assump:convex} hold. Let the step size be $\alpha_t = {1}/{\sqrt{T}}$ and 
$\bar{x} = \frac{1}{T}\sum_{t=0}^{T-1} x^{t,0,0}$ be the output of Algorithm~\ref{alg_stoc}.
We have
\begin{align*}
 \mathbb{E}[\Fm(\bar{x}) - F^*] 
    &\leq \frac{\|x^{0,0,0}- x^*\|^2 + sp \sigma[\sigma+  (\diam \Lphif +  \sigma) sp])}{2p\sqrt{T}}  . 
\end{align*}

\end{theorem}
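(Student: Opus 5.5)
We follow the standard distance-to-optimum analysis of stochastic gradient descent, applied to one full outer iteration. The target is a one-step inequality of the form
\begin{align*}
\mathbb{E}\|x^{t+1,0,0}-x^*\|^2 \le \mathbb{E}\|x^{t,0,0}-x^*\|^2 - 2p\alpha_t\, \mathbb{E}[\Fm(x^{t,0,0})-F^*] + \alpha_t^2 C,
\end{align*}
with $C = sp\sigma[\sigma + (\diam\Lphif+\sigma)sp]$. We then telescope over $t=0,\dots,T-1$, divide by $2p\alpha_t T = 2p\sqrt{T}$, and use convexity of $\Fm$ (Jensen) to pass to $\bar x$.

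\textbf{Step 1 (unrolling one outer iteration).} Each inner step changes only block $\sigma(i+1)$, and blocks are disjoint. Summing the updates over $i,j$ therefore gives
\begin{align*}
x^{t+1,0,0}-x^{t,0,0} = -\alpha_t \sum_{i=0}^{s-1}\sum_{j=0}^{p-1} U_{\sigma(i+1)}\nabla_{\sigma(i+1)} g_{\pi(j)}(x^{t,i,j},\xi^{t,i,j}),
\end{align*}
where $U_b$ embeds block $b$ into $\mathbb{R}^n$. Expanding $\|x^{t+1,0,0}-x^*\|^2$ produces three pieces.
\begin{itemize}
\item A quadratic term. By Cauchy--Schwarz over the $sp$ summands and Assumption~\ref{assump:conds}(b), it is at most $\alpha_t^2 (sp)^2\sigma^2$; a sharper blockwise count may give part of the $\sigma^2 sp$ contribution.
\item A cross term $-2\alpha_t\sum_{i,j}\langle x^{t,0,0}-x^*, U\nabla g_{\pi(j)}(x^{t,i,j},\xi)\rangle$.
\end{itemize}
In the cross term we replace $x^{t,i,j}$ by the anchor $x^{t,0,0}$. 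Taking conditional expectations removes the noise through Assumption~\ref{assump:conds}(a). This leaves $\nabla_{\sigma(i+1)} f_{\pi(j)}(x^{t,i,j})$, which we split as $\nabla_{\sigma(i+1)} f_{\pi(j)}(x^{t,0,0})$ plus an error.

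\textbf{Step 2 (main term).} Summing over $j$ gives $p\nabla_{\sigma(i+1)}\Fm(x^{t,0,0})$, and summing over $i$ covers every block exactly once, so the main term is $-2\alpha_t p\langle \nabla\Fm(x^{t,0,0}),x^{t,0,0}-x^*\rangle$. Convexity then bounds this by $-2\alpha_t p(\Fm(x^{t,0,0})-F^*)$.

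\textbf{Step 3 (error term; the main obstacle).} The error is $2\alpha_t\sum_{i,j}\langle x^{t,0,0}-x^*, \nabla_{\sigma(i+1)} f_{\pi(j)}(x^{t,0,0})-\nabla_{\sigma(i+1)} f_{\pi(j)}(x^{t,i,j})\rangle$.
\begin{itemize}
\item Cauchy--Schwarz with Assumption~\ref{assump:convex} bounds each inner product by $\diam\Lphif\|x^{t,i,j}-x^{t,0,0}\|$.
\item The displacement $x^{t,i,j}-x^{t,0,0}$ is a sum of at most $sp$ stochastic steps. Jensen and Assumption~\ref{assump:conds}(b) give $\mathbb{E}\|x^{t,i,j}-x^{t,0,0}\|\le \alpha_t sp\sigma$.
\item Summing over the $sp$ pairs $(i,j)$ yields $2\alpha_t^2\diam\Lphif\sigma (sp)^2$.
\end{itemize}
The constant comes out as $\diam\Lphif\sigma(sp)^2$, matching $C$ up to the factor $2$. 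This needs care: possibly only the half-sum bound $\sum_{i,j}(ip+j)\le (sp)^2/2$ is used, which absorbs the factor $2$.

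The delicate points are the conditioning, because the noise enters through $x^{t,i,j}$, and getting the exact constants. It may also be cleaner to keep the cross term with the stochastic gradient and bound $\langle x^{t,i,j}-x^{t,0,0},\cdot\rangle$ directly, which gives the $\sigma^2(sp)^2$ piece. Finally, with $\alpha_t=1/\sqrt T$, telescoping gives
\begin{align*}
\frac{1}{T}\sum_t \mathbb{E}[\Fm(x^{t,0,0})-F^*] \le \frac{\|x^{0,0,0}-x^*\|^2+C}{2p\sqrt T},
\end{align*}
and Jensen applied to $\bar x$ finishes the proof.
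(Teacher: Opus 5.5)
Your proposal is correct, and it organizes the computation differently from the paper. The paper expands $\|x^{t,i,j+1}-x^*\|^2$ around each inner iterate. Its quadratic term is therefore only $sp\alpha_t^2\sigma^2$, but its cross term carries the moving point $x^{t,i,j}-x^*$. The discrepancy $B^t$ then has two pieces:
a gradient-drift piece $\Delta L\|x^{t,0,0}-x^{t,i,j}\|$, and a point-drift piece $\sigma\|x^{t,0,0}-x^{t,i,j}\|$.
Both are controlled by $\mathbb{E}\|x^{t,0,0}-x^{t,i,j}\|\le (ip+j)\alpha_t\sigma$ and $\sum_{i,j}(ip+j)\le s^2p^2/2$.

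You expand once around the outer anchor, so the point in your cross term is already $x^{t,0,0}-x^*$ and no replacement is needed. That point is $\mathcal{F}_{t,i,j}$-measurable, so unbiasedness applies by the tower property and the conditioning is not delicate. Only the gradient-drift piece remains. The point drift reappears as the off-diagonal part of $\alpha_t^2\|\sum_{i,j}U_{\sigma(i+1)}\nabla_{\sigma(i+1)}g_{\pi(j)}(x^{t,i,j},\xi^{t,i,j})\|^2$, which Cauchy--Schwarz bounds by $\alpha_t^2(sp)^2\sigma^2$.

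The two routes rest on the same algebraic identity. Summing the paper's per-step expansions and writing $x^{t,i,j}-x^{t,0,0}$ as the sum of earlier steps gives exactly your expansion. Yours is shorter and needs one error term instead of two. With the exact count $\sum_{i,j}(ip+j)=sp(sp-1)/2$, both give the constant $\sigma^2(sp)^2+\Delta L\sigma\,sp(sp-1)$. The extra $sp\sigma^2$ in the paper's $C$ comes only from rounding that sum up to $s^2p^2/2$ in the point-drift piece. So you need no ``sharper blockwise count'': your total $\sigma(\sigma+\Delta L)(sp)^2$ is already at most $C$.

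You should, however, state the refined count as necessary rather than as a possibility. With the crude bound $\mathbb{E}\|x^{t,i,j}-x^{t,0,0}\|\le\alpha_t sp\sigma$, your total becomes $\sigma^2(sp)^2+2\Delta L\sigma(sp)^2$, which exceeds $C$ whenever $\Delta L\, sp>\sigma$. The variant you float at the end, keeping $x^{t,i,j}$ in the cross term and bounding the drift inner product directly, is precisely the paper's route.
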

The proof of Theorem~\ref{thm:convex} is postponed to  Section~\ref{sec:convex} of the Appendix. 
% [If we have space after the experiment I will add a short summary of proof]
Our convex analysis first establishes the following recursive bound
\begin{align*}
\mathbb{E}[\|x^{t+1,0,0} - x^*\|^2]
    &\leq \mathbb{E}[\|x^{t,0,0}- x^*\|^2]- 2\alpha_t p\mathbb{E}[(x^{t,0,0}- x^*)^\top(\nabla \Fm(x^{t,0,0}) )]\\
    &+ \alpha^2_t sp \sigma[\sigma+  (\diam \Lphif +  \sigma) sp].
\end{align*}
Then we apply the fact that all objective functions are convex, therefore $\Fm$ is convex, which yields
\begin{align*}
\mathbb{E}[\|x^{t+1,0,0} - x^*\|^2]
    &\leq \mathbb{E}[\|x^{t,0,0}- x^*\|^2] - 2\alpha_t p\mathbb{E}[\Fm(x^{t,0,0}) - F^*] + \alpha^2_t sp \sigma[\sigma+  (\diam \Lphif +  \sigma) sp],
\end{align*}
from where we obtain the final bound of Theorem~\ref{thm:convex} by applying a fixed step size and averaging the derived recursive bound.

One can see from the proof that the result of Theorem~\ref{thm:convex} only requires convexity of $\Fm$. When the goal is to approximate the whole Pareto front, one can apply Algorithm~\ref{alg_stoc} for a collection of values of the frequency vector $m$ so that $m/p$ discretizes sufficiently well the simplex of dimension~$q$. In doing that, $\Fm$ has to be convex for all $m$, which accounts to say that all the individual functions are convex (as in Assumption~\ref{assump:convex}). 

Theorem~\ref{thm:convex} recovers the standard convergence rate $\mathcal{O}(1/T^{1/2})$ of BCD single-objective optimization \cite{ghadimi2013stochastic, xu2015blockstochasticgradientiteration} and of stochastic alternating function minimization for 
bi-objective optimization~\cite{SLiu_LNVicente_2019}. 

% [Will comment a bit on how our assumptions differ from these work. maybe ours is better in some sense]

\subsection{The Polyak-Łojasiewicz case}
In this section, we consider the Polyak-Łojasiewicz (PL) inequality, a generalization of strong-convexity \citep{Polyak1964,polyak_condition}, which we impose for the weighted-sum function $F_m$. 
% \begin{assumption}[Strong Convexity]\label{assump:strongly-convex}
% For every $j \in \{ 1, \dots, q\}$, 
% $f_j(x)$ is $\mu$-strongly convex, i.e., there exists a constant $\mu > 0$ such that 
% $\forall x, x' \in \mathbb{R}^n$,
% \begin{align*}
% f_j(x) - f_j(x') \ge \langle \nabla f_j(x'), x - x' \rangle + \frac{\mu}{2}\|x - x'\|^2.
% \end{align*}
% \end{assumption}
% Trang: maybe use PL condition as it is non-convex and more preferable by the ML community... 
%  Luis: comment after the assumption \& remark about projection in convex case.
\begin{assumption}[Polyak-Łojasiewicz (PL) condition]\label{assump:PL}
The function $\Fm$ satisfies the  $\mu$-PL inequality, i.e., there exists a constant $\mu > 0$ such that 
$\forall x \in \mathbb{R}^n$,
\begin{align*}
    \left\| \nabla \Fm(x)  \right\|^2 \geq 2\mu [\Fm(x) - F^*], 
\end{align*}
where $F^*$ is defined as in Assumption \ref{assump:L}.
\end{assumption}

It is well known that a function satisfying the PL condition is not necessarily convex \citep{polyak_condition}. Under this assumption, one can show that single-objective stochastic gradient descent achieves the same theoretical rate as the sublinear rate $\mathcal{O}(1/T)$ in the strongly convex setting \citep{pmlr-v54-de17a,gower2021sgd}. Similarly, we show the same convergence rate for the optimality gap of Block-SMOO as follows. 
\begin{theorem}
\label{thm:stronglyconvex}
Let Assumptions \ref{assump:L}, \ref{assump:conds}, and \ref{assump:PL} hold.
Let the step size be $\alpha_t = \frac{2}{\mu (t+1)}$ and $\bar{x} = x^{T,0,0}$ be the output of Algorithm~\ref{alg_stoc}.
We have
\begin{align*}
\mathbb{E} [\Fm(\bar{x})- F^*] %= \mathbb{E} [\Fm(x^{T,0,0})- F^*]
&\leq  \frac{4\Lphif \sigma^2 p^2 (3\mu s + 2\Lphif  s^3)}{3T\mu^3}.
\end{align*}

\end{theorem}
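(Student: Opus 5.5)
We start from the per-outer-iteration descent bound \eqref{eq:main_bound2}, which was derived in the non-convex analysis using only Assumptions~\ref{assump:L} and~\ref{assump:conds}. That derivation imposes no restriction on $\alpha_t$ other than its role as a free step size, so we can reuse it here with a time-varying step. Subtracting $F^*$ from both sides and applying the PL inequality of Assumption~\ref{assump:PL} in the form $-\tfrac12\alpha_t\|\nabla \Fm(x^{t,0,0})\|^2 \le -\mu\alpha_t[\Fm(x^{t,0,0})-F^*]$ gives, with $e_t := \mathbb{E}[\Fm(x^{t,0,0})-F^*]$,
\begin{align*}
e_{t+1} \le (1-\mu\alpha_t)\, e_t + \alpha_t^2 \Lphif\sigma^2p^2 s + \alpha_t^3 \frac{\Lphif^2\sigma^2 p^2 s^3}{3}.
\end{align*}

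We now insert $\alpha_t = 2/(\mu(t+1))$. This gives $1-\mu\alpha_t = (t-1)/(t+1)$, which is the standard choice making the recursion telescope after multiplying by weights. Multiplying both sides by $t(t+1)$ yields
\begin{align*}
t(t+1)e_{t+1} \le (t-1)t\, e_t + \frac{4t}{\mu^2(t+1)}\Lphif\sigma^2p^2 s + \frac{8t}{3\mu^3(t+1)^2}\Lphif^2\sigma^2p^2 s^3 .
\end{align*}
We then bound $t/(t+1)\le 1$ and $t/(t+1)^2\le 1$. At $t=0$ the coefficient $(t-1)t$ vanishes, so the initial gap $e_0$ drops out entirely; this is consistent with the stated bound, which contains no $e_0$ term.

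Summing over $t=0,\dots,T-1$ telescopes the left-hand side to
\begin{align*}
(T-1)T\,e_T \le T\Big(\frac{4\Lphif\sigma^2p^2s}{\mu^2}+\frac{8\Lphif^2\sigma^2p^2s^3}{3\mu^3}\Big) = \frac{4\Lphif\sigma^2p^2(3\mu s+2\Lphif s^3)}{3\mu^3}\,T .
\end{align*}
Dividing by $T(T-1)$ gives a $1/(T-1)$ rate. To obtain exactly the constant with $1/T$ in the statement, we will refine the weights: either sum with the sharper bounds $t/(t+1)\le 1$ used more carefully, or multiply by $(t+1)(t+2)$ or $t(t+1)$ with an index shift, so that the leading factor on the left becomes $T(T+1)$ or $T^2$ rather than $T(T-1)$.

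The main obstacle is this last bookkeeping step: choosing the telescoping weights so that the final constant is exactly $\frac{4\Lphif\sigma^2p^2(3\mu s+2\Lphif s^3)}{3T\mu^3}$ with $T$ and not $T-1$ in the denominator. A second concern is checking that \eqref{eq:main_bound2} remains valid for large early step sizes such as $\alpha_0 = 2/\mu$. That derivation used $\Lphif$-smoothness and the bound on $A^t$, and it should not need any smallness condition on $\alpha_t$, since the remainder terms are simply polynomial in $\alpha_t$. If it did require one, we would start the recursion at the first $t$ for which $\alpha_t$ is small enough and absorb the earlier iterations into constants.
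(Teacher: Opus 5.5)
You follow the paper's route exactly: insert the PL inequality into \eqref{eq:main_bound2}, take $\alpha_t = 2/(\mu(t+1))$ so that $1-\mu\alpha_t=(t-1)/(t+1)$, multiply by $t(t+1)$, and telescope. The gap is the final step, which you flag but do not close. Write $C := \frac{4\Lphif\sigma^2p^2(3\mu s+2\Lphif s^3)}{3\mu^3}$. Bounding each of the $T$ summands by $C$ gives only $e_T \le C/(T-1)$, which is weaker than the theorem. Your proposed ``refinements'' are left unspecified. Reweighting is also unnecessary. With weights $(t+1)(t+2)$, for example, the $\alpha_t^2$ part picks up $\sum_{t=0}^{T-1}\frac{t+2}{t+1} = T+\sum_{k=1}^{T}\frac1k > T+1$, so that term no longer matches.

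The missing observation is one you nearly made. At $t=0$ the coefficient $(t-1)t$ of $e_t$ vanishes, but so do both noise terms $\frac{4t}{\mu^2(t+1)}(\cdots)$ and $\frac{8t}{3\mu^3(t+1)^2}(\cdots)$, since each carries the factor $t$. Hence the multiplied inequality at $t=0$ is the trivial $0\le 0$. The telescoped sum therefore has only $T-1$ nonzero summands, each at most $C$. Equivalently, as in the paper, $\sum_{t=0}^{T-1}\frac{t}{t+1} = T-\sum_{k=1}^{T}\frac1k \le T-1$. This gives $(T-1)T\,e_T \le (T-1)C$, and dividing by $T(T-1)$ yields exactly the stated bound for $T\ge 2$. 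For $T=1$ the bound follows directly from the $t=0$ recursion, $e_1 \le -e_0 + C \le C$.

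Your second concern is unfounded. The derivation of \eqref{eq:main_bound2} uses only the descent lemma, Lemma~\ref{lem:bounds}(b), $\Lphif$-smoothness, and $-a^2\le -\frac12 b^2+(a-b)^2$, and none of these restricts $\alpha_t$. So it applies from $t=0$ with $\alpha_0=2/\mu$, as the paper does. The negative factor $1-\mu\alpha_0=-1$ is harmless, because the weight $t(t+1)$ annihilates that step.
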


The proof of Theorem~\ref{thm:stronglyconvex} follows from the bound~\eqref{eq:main_bound2}, derived for the non-convex case.
\vspace{\baselineskip}

\begin{proof}
From Assumption \ref{assump:PL}, one has
\begin{align*}
    \left\| \nabla \Fm(x^{t,0,0})  \right\|^2 \geq 2\mu [\Fm(x^{t,0,0}) - F^*].
\end{align*}
Combining this bound with the inequality~\eqref{eq:main_bound2}, for every $t$, we obtain
\begin{align*}
\mathbb{E} [\Fm(x^{t+1,0,0})]
&\leq \mathbb{E} [\Fm(x^{t,0,0})]  -\frac{1}{2} \alpha_t \mathbb{E} \left[2\mu [\Fm(x^{t,0,0}) - F^*]\right]   + \alpha_t^2 \Lphif \sigma^2 p^2\left[s + \frac{\Lphif \alpha_t  s^3}{3}\right],
\end{align*}
which is equivalent to 
\begin{align*}
\mathbb{E} [\Fm(x^{t+1,0,0})- F^*]
&\leq (1- \alpha_t \mu)\mathbb{E} [\Fm(x^{t,0,0}) - F^*]  + \alpha_t^2 \Lphif \sigma^2 p^2\left[s + \frac{\Lphif \alpha_t  s^3}{3}\right].
\end{align*}
Choosing $\alpha_t = \frac{2}{\mu (t+1)}$ results in
\begin{align*}
\mathbb{E} [\Fm(x^{t+1,0,0})- F^*]
&\leq \frac{t-1}{t+1}\mathbb{E} [\Fm(x^{t,0,0}) - F^*]  + \frac{4}{\mu^2 (t+1)^2} \Lphif \sigma^2 p^2\left[s + \frac{2\Lphif  s^3}{3(t+1)\mu }\right], 
\end{align*}
which leads to 
\begin{align*}
t(t+1)\mathbb{E} [\Fm(x^{t+1,0,0})- F^*]
&\leq t(t-1)\mathbb{E} [\Fm(x^{t,0,0}) - F^*]  + \frac{4t}{\mu^2 (t+1)} \Lphif \sigma^2 p^2\left[s + \frac{2\Lphif  s^3}{3\mu }\right].
\end{align*}

Applying the inequality recursively for all $t$ yields
\begin{align*}
(T-1)T\mathbb{E} [\Fm(x^{T,0,0})- F^*]
&\leq \sum_{t=0}^{T-1} \frac{4t}{\mu^2 (t+1)} \Lphif \sigma^2 p^2\left[s + \frac{2\Lphif  s^3}{3\mu }\right],\\
&\leq (T-1) \frac{4}{\mu^2} \Lphif \sigma^2 p^2\left[s + \frac{2\Lphif  s^3}{3\mu }\right].
\end{align*}
Dividing both sides by $T(T-1)$, we obtain the desired bound. 
\end{proof}
The Polyak-Łojasiewicz convergence rate analysis of Block-SMOO matches the strongly convex convergence rate $\mathcal{O}(1/T^{1/2})$ of BCD single-objective optimization \cite{ghadimi2013stochastic, xu2015blockstochasticgradientiteration} and the stochastic alternating function minimization for 
bi-objective optimization~\cite{SLiu_LNVicente_2019}. However, the Polyak-Łojasiewicz condition is weaker than assuming strong convexity, and in this case we do not need to assume bounded iterates as in~\cite{SLiu_LNVicente_2019}.

\section{Experiments}\label{sec:exp}
 
% We consider the multi-variate reduced-rank regression problem: 
% \begin{align*}
%     \min_{A, B} \| Y - ABX \|^2, 
% \end{align*}
% where $Y \in \mathbb{R}^q$, $X \in \mathbb{R}^d$, $A \in \mathbb{R}^q \times \mathbb{R}^r$, $B \in \mathbb{R}^r \times \mathbb{R}^d$ and $r$ is a specified low rank dimension.
In this section, we illustrate the empirical performance of Block-SMOO, comparing it to other non-alternating methods. 
\subsection{Reduced-rank regression problem as an MOO}
We consider the following multivariate reduced-rank regression problem \cite{izenman1975reduced,reinsel1998multivariate}:
\begin{align}\label{prob:rrr}
    \min_{U,V} \left\| Y - XUV \right\|^2,
\end{align}
where $X \in \mathbb{R}^{N \times d}$ is the feature matrix, $Y \in \mathbb{R}^{N \times q}$ is the response matrix, $U \in \mathbb{R}^{d \times r}$ and $V \in \mathbb{R}^{r \times q}$ are the low-rank factor matrices with prescribed rank $r$.

Multivariate regression is widely used to model multiple correlated outcomes simultaneously, e.g., in econometrics, biology, and computer science~\cite{johnson2002applied,reinsel1998multivariate}. Meanwhile, reduced rank regression considers the case when the variables share a low-dimensional structure, which has applications in finance, neuroimaging, and high-dimensional statistics \cite{izenman1975reduced,reinsel1998multivariate,vounou2010discovering}. As regressions for each of the $q$ responses may convey different meanings, this problem can be recast as multi-objective optimization by treating each of the response as a separate objective, yielding
\begin{align*}
    \min_{U,V} F(U,V) = (f_1(U,V), \dots, f_q(U,V)),  \text{ where } f_k(U,V) =
     \left\| Y_k - XUV_k \right\|^2, k \in\{ 1, \dots, q\},
\end{align*}
where $Y_k \in \mathbb{R}^N$ and $V_k \in \mathbb{R}^r$ denote the $k$-th column of $Y$ and $V$, respectively. If one minimizes the equal-weighted-sum function $F_{1/q} = 1/ q \sum_{k=1}^q f_k$ for this MOO, then the problem reduces to the reduced-rank regression problem~\eqref{prob:rrr}. As our problem setting is stochastic, we compute  gradient estimators from a minibatch of the dataset, which we describe below. 

\subsection{Datasets}
We consider two datasets in this experiment.  
Firstly, 
we generate a \textit{synthetic} dataset with $N_{\mathrm{train}} = 2^{14}$ training samples and $N_{\mathrm{test}} = 2^{10}$ test samples, input dimension $d = 400$, and number of responses $q = 5$. The ground-truth weight matrix $W^{\star} = U^{\star} V^{\star}$ has rank $r = 3$, where $U^{\star} \in \mathbb{R}^{d \times r}, V^{\star} \in \mathbb{R}^{r \times q}$ are drawn i.i.d.\ from $\mathcal{N}(0, I)$. Observations are generated as
\begin{align}
    Y = X U^{\star} V^{\star} + \varepsilon, \qquad \varepsilon \sim \mathcal{N}(0, \sigma^2 I),
\end{align}
with noise level $\sigma = 0.05$, and the data $X$ are drawn i.i.d.\ from $\mathcal{N}(0, I)$.

In addition, 
we perform experiments on the \textit{Beijing Multi-Site Air Quality} dataset~\cite{beijing_air_quality}, publicly available from the UCI Machine Learning Repository~\cite{uci_ml_repository}. The dataset contains hourly measurements from 12 monitoring stations across Beijing from 2013 to 2017. The response matrix $Y$ consists of the concentrations of six air pollutants: PM$_{2.5}$, PM$_{10}$, SO$_2$, NO$_2$, CO, and~O$_3$. The feature matrix $X$ comprises meteorological and temporal covariates including wind direction, wind speed, temperature, dew point, pressure, precipitation, and station and time indicators, yielding $d = 35$ features after one-hot encoding of categorical variables and removal of rows with missing values.
We split the data chronologically; the first $70\%$ of rows forms the training set and the remaining $30\%$ forms the test set ($N_{\mathrm{train}} \approx 265{,}000$ and $N_{\mathrm{test}} \approx 115{,}000$). All features and responses are standardized to zero mean and unit variance using training-set statistics.

% Finally, we evaluate on the \textit{SARCOS inverse dynamics} dataset~\cite{sarcosVijayakumar2000}, available from the repository~\cite{gpmlData}. This dataset corresponds to inverse dynamics regression for a seven degrees-of-freedom anthropomorphic robot arm. Each sample contains 21 input variables (7 joint positions, 7 joint velocities, and 7 joint accelerations) and 7 output variables (joint torques), i.e., $X \in \mathbb{R}^{n \times 21}$ and $Y \in \mathbb{R}^{n \times 7}$.
% The dataset provides predefined train and test matrices with 44,484 training samples and 4,449 test samples. We also standardize both features and targets using training-set statistics.

\subsection{Methods and experiment setting}

We evaluate our proposed Block-SMOO algorithm against three baseline methods. To ensure a fair comparison, all algorithms optimize the same equal-weighted-sum function~$F_{1/q} = \frac{1}{q} \sum_{k=1}^q f_k$. The first baseline, denoted as the \text{Weighted-Sum} method~\cite{SGass_TSaaty_1955}, employs standard mini-batch stochastic gradient descent on $F_{1/q}$, updating all functions and coordinates simultaneously at every step. The second baseline is the \text{Function-Alternate} method (Algorithm~\ref{alg_stoc} with $s=1$), which implements stochastic alternating function minimization, a generalization of the bi-objective approach in~\cite{SLiu_LNVicente_2019}. The third baseline, termed the \text{Block-Alternate} method (Algorithm~\ref{alg_stoc} with $q=1$), applies the stochastic BCD algorithm~\cite{xu2015blockstochasticgradientiteration} directly to the weighted-sum function. Finally, our proposed \text{Block-SMOO} method alternates the minimization of both functions and variable blocks (Algorithm~\ref{alg_stoc}). For Block-SMOO, we report results using a preference vector where $m_k = 2$ for all $k$, as this configuration empirically proved most efficient among the tested values $m_k \in \{1, 2, 4, 8\}$.

Furthermore, decision variables can be partitioned into blocks in various ways. In our experiments for both \text{Block-SMOO} and \text{Block-Alternate}, we evaluate two partitioning strategies: a two-block partition $\{U, V\}$ and a four-block partition $\{U, V^{1}, V^{2}, V^{3}\}$, where $V^{i}$ denotes the $i$-th row of $V$. We report results from the best-performing partition for each algorithm. Additionally, for all alternating variants, the index ordering of~$\sigma$ and~$\pi$ in Algorithm~\ref{alg_stoc} are randomly reshuffled at the beginning of each corresponding cycle.

All algorithms use a batch size of %$B = 128$ for the synthetic dataset and 
$B = 512$ for all %the real 
datasets. The initial matrices $U$ and~$V$ are sampled independently from $\mathcal{N}(0, 0.01I)$.
Each configuration is evaluated over a grid of step sizes, i.e., $\{0.001,\, 0.002,\, 0.005, 0.01,\, 0.02,\, 0.05\}$,
% \begin{align*}
%     \eta \in \{0.001,\, 0.002,\, 0.005, 0.01,\, 0.02,\, 0.05\}.
% \end{align*},
and the best step size for each algorithm is selected as the one achieving the lowest final test loss across the grid.
We note that our method requires less computational cost per iteration, and thus for fair comparison, we run 
all algorithms for a fixed budget of time $T_{\max} = 2\,\mathrm{sec}$ for the \textit{synthetic} dataset and $T_{\max} = 4\,\mathrm{sec}$ for the \textit{Beijing Multi-Site Air Quality} dataset, with test losses recorded every $\Delta t = 0.2\,\mathrm{sec}$ and $\Delta t = 0.4\,\mathrm{sec}$, respectively for the two datasets. Results are averaged over 10 independent random seeds. 
% All the experiments were conducted on a MacBook Pro with a 2.3 GHz Quad-Core Intel Core i7 processor and 16 GB of memory.
\subsection{Comparison results}
\begin{figure}[H]
  \centering  \includegraphics[width=0.495 \textwidth]{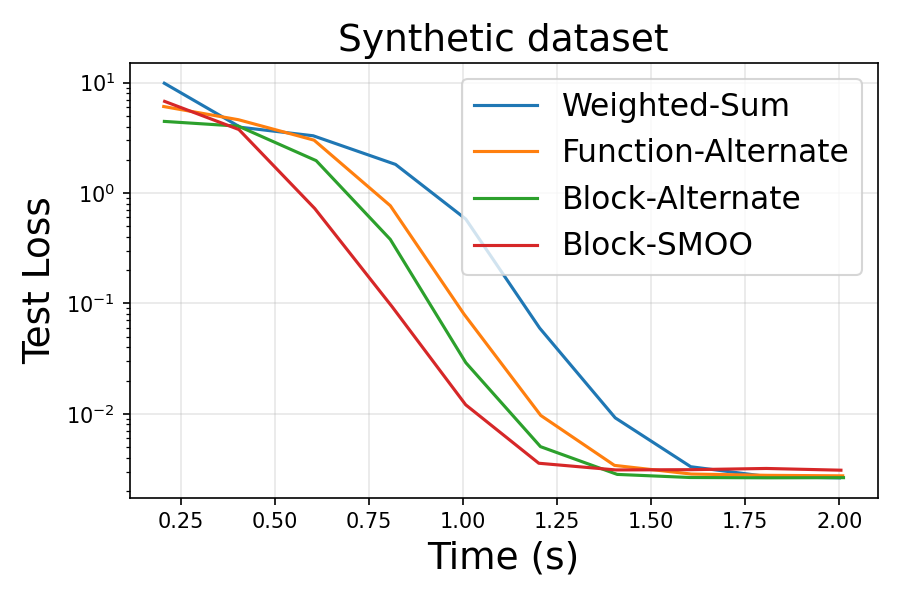}
    \includegraphics[width=0.495 \textwidth]{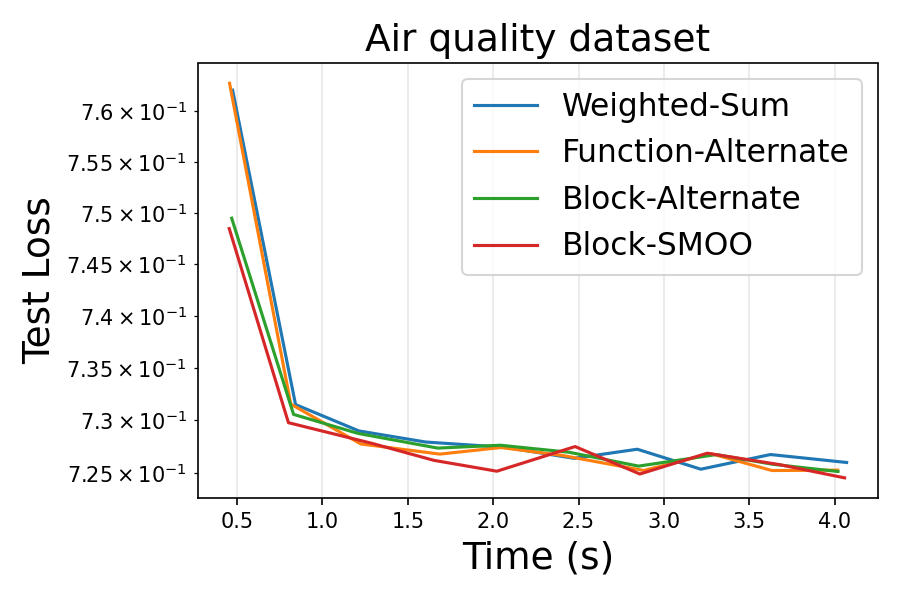}
  \caption{Test loss computed on the weighted-sum function $F_{1/q}$ versus computer time on the \textit{synthetic} dataset (left) and the \textit{Beijing Multi-Site Air Quality} dataset (right).}\label{fig2}
\end{figure}
% \begin{figure}[htbp]
% \centering
% % \includegraphics[width=0.7\textwidth]{synthetic_results.png} % Path to your synthetic dataset figure (e.g., 3.png)
% \caption{Test loss versus wall-clock time on the synthetic dataset. Block-SMOO achieves the fastest initial convergence, reaching the optimal loss significantly quicker than the Weighted-Sum baseline, which requires roughly twice the runtime to fully converge.}
% \label{fig:synthetic}
% \end{figure}

We report the test loss versus time for the datasets in Figure \ref{fig2}.
On the \textit{synthetic dataset}, our proposed Block-SMOO algorithm exhibits the most rapid initial convergence. It drops to a low loss significantly quicker than the standard Weighted-Sum method. While the Function-Alternate and Block-Alternate methods offer moderate improvements over the Weighted-Sum baseline, they are consistently outpaced during the initial descent by the dual-alternating approach of Block-SMOO.

On the real-world \textit{Beijing Air Quality} dataset, Block-SMOO continues to demonstrate robust empirical performance. It consistently achieves the lowest test loss during the early stages of optimization and maintains a comparable performance over the baseline methods throughout the entire training duration. The standard Weighted-Sum method generally exhibits the slowest progress and higher overall test loss early in the run. These empirical findings support our theoretical claims and highlight the practical effectiveness gained by simultaneously alternating the update steps of objective functions and variable blocks.

\subsection{Pareto front approximation}

We are now interested in the ability of Block-SMOO to determine the whole Pareto front, by applying it to a collection of values of the vector $m$. For the \textit{Beijing Multi-Site Air Quality} dataset above, we considered an MOO setting with only three objective functions, corresponding to the air pollutants: PM$_{2.5}$, PM$_{10}$, and SO$_2$. We compare the approximated Pareto front generated by Block-SMOO to the one from the Weighted-Sum method. 

We ran Block-SMOO $231$ times with $p = m_1 + m_2 + m_3 = 20$. Weighted-Sum was also run $231$ times on the objective $F_m = 1/20(m_1 f_1 + m_2 f_2 + m_3 f_3)$, where in both cases $m_k \in \{0,\dots, 20\}$ for all $k$.
For both approaches, we use a fixed step size $0.02$ at each iteration. %The starting point for each run is randomly generated within the feasible region. 
Similar to the prior experiment, we sample the initial matrices $U$ and $V$ independently from $\mathcal{N}(0, 0.01 I)$.
We run each algorithm for $20$ data passes. As this Pareto front experiment is computationally expensive, we truncated the train set and the test set so that they contains $N_{\mathrm{train}} = 2^{14}$ training samples and $N_{\mathrm{test}} = 2^{10}$ test samples, respectively.

\begin{figure}[H]
  \centering  \includegraphics[width= \textwidth]{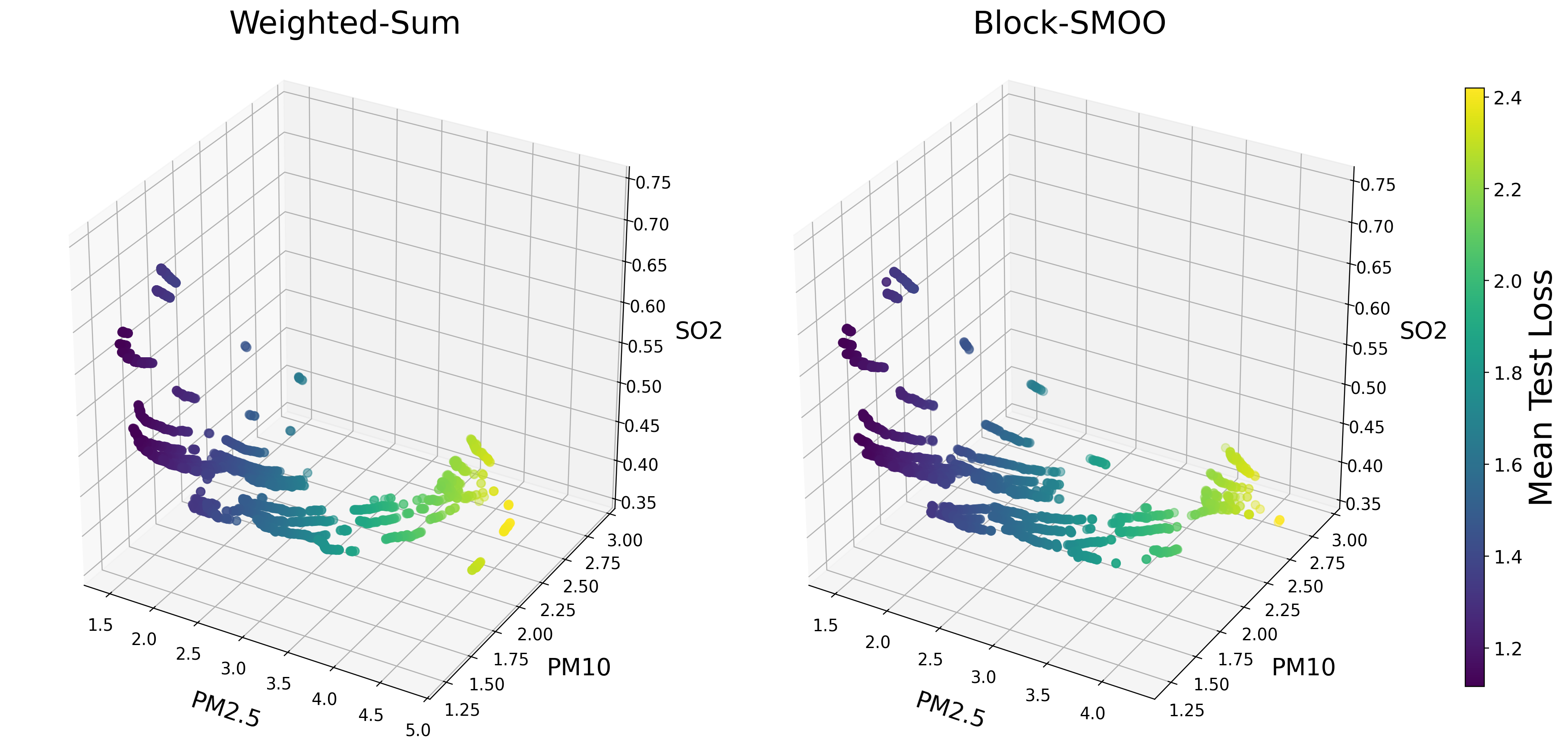}
  \caption{Approximation of Pareto fronts for test losses computed by the Weighted-Sum (left) and Block-SMOO (right) algorithms for the \textit{Beijing Multi-Site Air Quality} dataset.}\label{Pareto_fronts}
\end{figure}
Figure~\ref{Pareto_fronts}
illustrates the approximated Pareto fronts obtained from both the Weighted-Sum and the Block-SMOO algorithms for the described problem. We computed the average test loss over three objective functions for the gradient color of each plot. Both plots demonstrate a similar structure of non-dominated solutions, suggesting that Block-SMOO effectively captures the trade-off in the Pareto front, while maintaining competitive performance across multiple objectives compared to the standard Weighted-Sum method.
\begin{table}[H]
\caption{Metrics for Pareto fronts computed by the Weighted-Sum and Block-SMOO methods.}
\label{tab:metrics_comparison}
\begin{center}
\begin{tabular}{l|ccc}
\toprule
Metric &  Purity & Spread $\Gamma$ & Spread $\Delta$ \\
\midrule
Weighted-Sum   & 0.5105 & 0.2878 & 1.1549\\
Block-SMOO   & 0.6947 & 0.4955 & 1.1785\\
\bottomrule
\end{tabular} 
\end{center}
\end{table}
Moreover, to assess the quality of the generated Pareto fronts, we employed two standard metrics: Purity \cite{SBandyopadhya_SKPal_BAruna_2004} and Spread \cite{KDeb_etal_2002}. The Purity metric evaluates the accuracy of the approximation by calculating the percentage of ``true" non-dominated solutions among all the non-dominated points generated by each algorithm. A higher purity ratio corresponds
to a more accurate Pareto front. 
On the other hand, the Spread metric is designed to measure the extent of the point spread in a computed Pareto front. We computed two Spread variations: the $\Gamma$ metric, which measures the maximum gap or ``hole size" between points, and the $\Delta$ metric, which indicates how well the points are distributed. Lower values for both $\Gamma$ and $\Delta$ indicate a more evenly distributed and well-spread Pareto front. Detailed mathematical formula for computations of these metrics can be found in \cite{ALCustodio_etal_2011,SLiu_LNVicente_2019}.
We present the results in Table \ref{tab:metrics_comparison}.
Block-SMOO has achieved a superior accuracy with a Purity of approximately 70\% (compared to 51\% for Weighted-Sum), which is actually visible from Figure~\ref{Pareto_fronts}. The two methods yielded fronts with a similar value of the~$\Delta$~Spread. Weighted-Sum has however achieved a better~$\Gamma$~spread.

%and a larger maximum gap $\Gamma$. This indicates a clear trade-off between the depth (Purity) and the breadth (Spread) of the resulting Pareto fronts. To resolve this conflict, one can tune hyperparameters such as the number of initial starting points or the density of points generated at each iteration to achieve the balance between solution accuracy and Pareto front coverage.

% Metrics for Test Set:
% purity: {'alg_0': 0.5105193951347797, 'alg_1': 0.6947073256221521}
% spread_gamma: {'alg_0': 0.28783326462341474, 'alg_1': 0.4954577990798663}
% spread_delta: {'alg_0': 1.1548615789049206, 'alg_1': 1.1784880657762298}
\section{Concluding remarks}\label{sec:con}

In this paper, we demonstrated that the proposed stochastic block coordinate and function minimization algorithm (Block-SMOO) achieves convergence rates matching those of standard single-objective block coordinate descent methods. Specifically, we established sublinear convergence rates of $\mathcal{O}(1/T^{1/2})$ for both general smooth convex and non-convex functions, and an better rate of $\mathcal{O}(1/T)$ when the weighted-sum function satisfies the Polyak-Łojasiewicz (PL) condition. These theoretical guarantees are derived using applications of recursive descent bounds on the objective function value and the squared distance to optimality, established for the starting iterates in each outer loop of the algorithm.

Our current convergence analysis focuses on a two-loop structure where the outer loop iterates over the variable block indices, and the inner loop applies gradient steps to individual objective functions. Although our established bounds do not automatically hold if this two-loop structure is inverted, we anticipate that the underlying proof techniques and principles developed in this work can be adapted to that reversed setting with some minor theoretical modifications.

Other research questions arises from this work. While the current algorithm executes updates in a sequential Gauss-Seidel fashion, exploring parallel updates in a Gauss-Jacobi style represents an exciting, yet challenging direction that is particularly suitable for distributed computing problems. Furthermore, because many practical applications are characterized by non-smooth optimization landscapes, extending our framework to accommodate these cases and providing meaningful convergence guarantees for non-smooth multi-objective optimization is an important and promising topic for future research.

%%%%%%%%%%%%%%%%%%%%%%%%%%%%%%%%%%%%%%

% \section*{Acknowledgments}
% This work is partially supported by the U.S. Air Force Office of Scientific Research~(AFOSR) award~FA9550-23-1-0217 and the U.S. Office of Naval Research~(ONR) award~N000142412656.

% \bibliography{ref}

\appendix

%%%%%%%%%%%%%%%%%%%%%%%%%%%%%%%%%%%%%%%%%%%%%%%%%%%%%%%%%%%%%%%%%%%%%%%%%%%%%%%%%%%%%%%%%
\section*{Appendix}\label{app:}
%%%%%%%%%%%%%%%%%%%%%%%%%%%%%%%%%%%%%%%%%%%%%%%%%%%%%%%%%%%%%%%%%%%%%%%%%%%%%%%%%%%%%%%%%

\section{Basic notations and lemma}
In this Appendix, we present the proof of the non-convex case in Section~\ref{sec:non-convex} and the proof of the convex case in Section~\ref{sec:convex}. Throughout the proofs, similar to~\cite{Nesterov2012EfficiencyOC}, we use the notation  $U_i \in \mathbb{R}^{n \times n_i} $, for $ i=1,\dots,s$, where 
\begin{align}\label{eq:def_Ui}
    (U_1, U_2, \dots, U_s) = I_n. 
\end{align}
Then in this notation $x_i = U_i^\top x$ and $x = \sum_{i=1}^s U_i x_i$ for every $i \in \{ 1, \dots, s\}$ and $x \in \mathbb{R}^n$.
%The vector of partial derivatives corresponding to the variables in the vector $x_i$ is $U_i^\top \nabla f(x)$.

Recall that in Algorithm~\ref{alg_stoc}, we denote by  $\nabla_{\sigma(i+1)} f_{\pi(j)} (x^{t,i,j})$ the (partial) gradient of $f_{\pi(j)}$ with respect to $x_{\sigma(i+1)}$, computed at $x^{t,i,j}$. By the updates of Algorithm \ref{alg_stoc} and the definition of~$U_i$ in~\eqref{eq:def_Ui}, we have 
\begin{align}\label{eq:alg_update}
    x^{t,i,j+1} = x^{t,i,j} - \alpha_t U_{\sigma(i+1)} \nabla_{\sigma(i+1)} g_{\pi(j)} (x^{t,i,j}, \xi^{t,i,j}).
\end{align}
In addition, we let $\mathcal{F}_{t,i,j}$ be the $\sigma$-algebra generated by $\{x^{0,0,0}, \xi^{0,0,0}, \xi^{0,0,1}, \dots, \xi^{t,i,j-1}\}$ i.e., $\mathcal{F}_{t,i,j}$ contains all the random information up to iteration $(t,i,j)$. Before presenting our main analysis, we need the following Lemma. 
\begin{lemma}\label{lem:bounds}
Let Assumption \ref{assump:conds} holds. 
We have 
\begin{itemize}
\item[(a)] For every $x\in \mathbb{R}^n$, $\|\nabla f_j(x)\| \leq \sigma$ and $\|\nabla \Fm(x)\| \leq \sigma$.
\item[(b)] For every $t, i,$ and $j$, $\mathbb{E} \left\|x^{t,i,j+1}-x^{t,i,j}\right\| \leq \alpha_t \sigma $ and $\mathbb{E} \left[\left\|x^{t,i,j+1}-x^{t,i,i}\right\|^2\right] \leq \alpha^2_t \sigma^2$.
\end{itemize} 
\end{lemma}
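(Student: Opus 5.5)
The plan for Lemma~\ref{lem:bounds} is to get both parts from Assumption~\ref{assump:conds} using Jensen's inequality and the tower property of conditional expectation.

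\textbf{Part (a).} Fix $x$ and $k$. By unbiasedness (a), $\nabla f_k(x)=\mathbb{E}_\xi[\nabla g_k(x,\xi)]$. Since the norm is convex, Jensen's inequality gives
\[
\|\nabla f_k(x)\|^2 \le \mathbb{E}_\xi\|\nabla g_k(x,\xi)\|^2 \le \sigma^2 .
\]
For $\Fm$, differentiate the weighted sum term by term: $\nabla\Fm(x)=\sum_k \frac{m_k}{p}\nabla f_k(x)$. The weights $m_k/p$ are nonnegative and sum to one, so the triangle inequality yields $\|\nabla \Fm(x)\|\le\sigma$.

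\textbf{Part (b).} By \eqref{eq:alg_update}, $x^{t,i,j+1}-x^{t,i,j}=-\alpha_t U_{\sigma(i+1)}\nabla_{\sigma(i+1)} g_{\pi(j)}(x^{t,i,j},\xi^{t,i,j})$. The matrix $U_{\sigma(i+1)}$ has orthonormal columns, so it preserves norms. A partial gradient is a subvector of the full gradient, so its norm is at most the full norm. Hence
\[
\|x^{t,i,j+1}-x^{t,i,j}\|^2\le \alpha_t^2\|\nabla g_{\pi(j)}(x^{t,i,j},\xi^{t,i,j})\|^2 .
\]
Now condition on $\mathcal{F}_{t,i,j}$. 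Under this conditioning $x^{t,i,j}$ is fixed, and the paper treats $\sigma$ and $\pi$ as deterministic or measurable with respect to this $\sigma$-algebra. Assumption~\ref{assump:conds}(b) then bounds the conditional expectation of the right-hand side by $\alpha_t^2\sigma^2$. Taking total expectation gives the squared bound. The first-moment bound follows from Jensen: $\mathbb{E}\|\cdot\|\le(\mathbb{E}\|\cdot\|^2)^{1/2}\le\alpha_t\sigma$.

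\textbf{Typo in the statement.} The second inequality in (b) is written with $x^{t,i,i}$; I read it as $x^{t,i,j}$.

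\textbf{Main obstacle.} The only delicate point is the measurability and conditioning in (b). The assumption must be applied at the random point $x^{t,i,j}$, with $\xi^{t,i,j}$ independent of the past given $\mathcal{F}_{t,i,j}$. I would state this explicitly and then use the tower property.
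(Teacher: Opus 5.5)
Your proposal is correct and follows essentially the same route as the paper. Part (a) uses Jensen with unbiasedness and the second-moment bound, then the triangle inequality for $\Fm$. Part (b) uses the update formula with $\|U_{\sigma(i+1)}\|\le 1$, conditioning on $\mathcal{F}_{t,i,j}$, and taking total expectation. The only cosmetic difference is that you get the first-moment bound by applying Jensen to the total expectation after the squared bound, whereas the paper applies Jensen conditionally. Your reading of $x^{t,i,i}$ as $x^{t,i,j}$ matches what the paper's proof actually establishes.
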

\begin{proof} 
\begin{itemize}
\item[(a)] We obtain the first bound by applying the unbiased property in Assumption \ref{assump:conds}(a), the Jensen's inequality, and the bounded gradient condition in Assumption \ref{assump:conds}(b),
\begin{align}\label{eq:bound_grad}
    \|\nabla f_j(x)\| =  \|\mathbb{E}_{\xi} \left[\nabla g_j(x,\xi)\right]\| \leq \sqrt{ \mathbb{E}_{\xi} \left[\|\nabla g_j(x,\xi)\|^2\right]} \leq \sigma.
\end{align}
The second bound follows from the definition of $\Fm$ and the triangle inequality, i.e.,
\begin{align*}
    \|\nabla \Fm(x)\| =  \left\|\frac{1}{p}\sum_{j=0}^{p-1}\nabla f_{\pi(j)} (x)\right\| \leq \frac{1}{p}\sum_{j=0}^{p-1}\|\nabla f_{\pi(j)} (x)\| \overset{\eqref{eq:bound_grad}}{\leq} \sigma.
\end{align*}
%By convexity of the norm operator, $\|\nabla \Fm(x)\|  \leq \sigma$.
\item[(b)] 
From equation~\eqref{eq:alg_update}, we have 
\begin{align*}
    \|x^{t,i,j+1} - x^{t,i,j}\| = \alpha_t \| U_{\sigma(i+1)} \nabla_{\sigma(i+1)} g_{\pi(j)} (x^{t,i,j}, \xi^{t,i,j})\|.
\end{align*}
Applying expectation with respect to $\mathcal{F}_{t,i,j}$, then using Jensen's inequality, the fact that $\| U_{\sigma(i+1)}\| \leq 1$, and Assumption \ref{assump:conds}(b), one obtains 
\begin{align*}
    \mathbb{E} [\|x^{t,i,j+1} - x^{t,i,j}\|  | \mathcal{F}_{t,i,j}]&= \alpha_t  \mathbb{E} [\| U_{\sigma(i+1)} \nabla_{\sigma(i+1)} g_{\pi(j)} (x^{t,i,j}, \xi^{t,i,j})\|| \mathcal{F}_{t,i,j}]\\
    &\leq \alpha_t  \sqrt{\mathbb{E} [\| U_{\sigma(i+1)} \nabla_{\sigma(i+1)} g_{\pi(j)} (x^{t,i,j}, \xi^{t,i,j})\|^2| \mathcal{F}_{t,i,j}]} \leq \alpha_t \sigma.
\end{align*}
Similar arguments yield
\begin{align*}
    \mathbb{E} [\|x^{t,i,j+1} - x^{t,i,j}\|^2  | \mathcal{F}_{t,i,j}]&= \alpha_t^2  \mathbb{E} [\| U_{\sigma(i+1)} \nabla_{\sigma(i+1)} g_{\pi(j)} (x^{t,i,j}, \xi^{t,i,j})\|^2| \mathcal{F}_{t,i,j}]\leq \alpha_t^2 \sigma^2.
\end{align*}
Taking total expectation, we obtain
\begin{align*}
    \mathbb{E} \left\|x^{t,i,j+1}-x^{t,i,j}\right\| \leq \alpha_t \sigma \text{ and }
    \mathbb{E} \left[\left\|x^{t,i,j+1}-x^{t,i,j}\right\|^2\right] \leq \alpha_t^2 \sigma^2.
\end{align*}
\end{itemize} 
\end{proof}
\section{Proof of the non-convex case}\label{sec:non-convex}

We are ready to present the proof of Theorem~\ref{thm:non-convex} below. 
\vspace{\baselineskip}

\begin{proof}
From the definition of $\Fm$ and Assumption \ref{assump:L}, $\Fm$ is also $\Lphif$-smooth. Based on the descent lemma~\cite[Proposition A.24]{bertsekas1997nonlinear} and Equation~\eqref{eq:alg_update}, we have the following bound for every $t,i,j$, 
\begin{align*}
\Fm(x^{t,i,j+1}) &\leq \Fm(x^{t,i,j}) + \nabla \Fm(x^{t,i,j})^\top (x^{t,i,j+1} - x^{t,i,j}) + \frac{\Lphif}{2} \|x^{t,i,j+1} - x^{t,i,j}\|^2\\
&\leq \Fm(x^{t,i,j}) - \alpha_t \nabla \Fm(x^{t,i,j})^\top \left(  U_{\sigma(i+1)} \nabla_{\sigma(i+1)} g_{\pi(j)} (x^{t,i,j}, \xi^{t,i,j}) \right)  \\
&+ \alpha_t^2 \frac{\Lphif}{2} \left\|   U_{\sigma(i+1)} \nabla_{\sigma(i+1)} g_{\pi(j)} (x^{t,i,j}, \xi^{t,i,j})\right\|^2\\
&\leq \Fm(x^{t,i,j}) - \alpha_t  \nabla_{\sigma(i+1)} \Fm(x^{t,i,j})^\top \left(  \nabla_{\sigma(i+1)} g_{\pi(j)} (x^{t,i,j}, \xi^{t,i,j}) \right)  \\
&+ \alpha_t^2 \frac{\Lphif}{2} \left\| \nabla_{\sigma(i+1)} g_{\pi(j)} (x^{t,i,j}, \xi^{t,i,j})\right\|^2,
\end{align*}
where the last equation used  $\|U_{\sigma(i+1)}\|^2 \leq 1$ and $g_i^\top = g^\top U_i$. 
Applying expectation with respect to $\mathcal{F}_{t,i,j}$ to the previous bound followed by Assumption \ref{assump:conds}, one obtains
\begin{align*}
\mathbb{E} [\Fm(x^{t,i,j+1}) | \mathcal{F}_{t,i,j}]
&\leq \Fm(x^{t,i,j}) - \alpha_t  \nabla_{\sigma(i+1)} \Fm(x^{t,i,j})^\top  \nabla_{\sigma(i+1)} f_{\pi(j)} (x^{t,i,j})  + \alpha_t^2 \frac{\Lphif}{2} \sigma^2.
\end{align*}

Taking total expectation on both sides of this bound and applying the inequality recursively for all $i,j$, we have
\begin{align}\label{eq:bound_non-convex}
\mathbb{E} [\Fm(x^{t,s,p})]
&\leq \mathbb{E} [\Fm(x^{t,0,0})] - \alpha_t \sum_{i=0}^{s-1}\sum_{j=0}^{p-1} \mathbb{E} \left [\nabla_{\sigma(i+1)} \Fm(x^{t,i,j})^\top  \nabla_{\sigma(i+1)} f_{\pi(j)} (x^{t,i,j})\right]  + ps\alpha_t^2 \frac{\Lphif}{2} \sigma^2\nonumber\\
&\leq \mathbb{E} [\Fm(x^{t,0,0})] - \alpha_t  \mathbb{E} \left[\sum_{i=0}^{s-1}\left\| \nabla_{\sigma(i+1)} \Fm(x^{t,i,0})  \right\|^2 \right]   + ps\alpha_t^2 \frac{\Lphif}{2} \sigma^2 + \alpha_t \mathbb{E} [A^t], 
\end{align}
where $A^t$ is the following term:
\begin{align*}
&\left|\sum_{i=0}^{s-1}\nabla_{\sigma(i+1)} \Fm(x^{t,i,0})^\top \nabla_{\sigma(i+1)} \Fm(x^{t,i,0}) -\sum_{i=0}^{s-1}\sum_{j=0}^{p-1}\nabla_{\sigma(i+1)} \Fm(x^{t,i,j})^\top  \nabla_{\sigma(i+1)} f_{\pi(j)} (x^{t,i,j})\right|\\
&=\left|\sum_{i=0}^{s-1}\sum_{j=0}^{p-1}\nabla_{\sigma(i+1)} \Fm(x^{t,i,0})^\top  \nabla_{\sigma(i+1)} f_{\pi(j)} (x^{t,i,0}) -\sum_{i=0}^{s-1}\sum_{j=0}^{p-1}\nabla_{\sigma(i+1)} \Fm(x^{t,i,j})^\top  \nabla_{\sigma(i+1)} f_{\pi(j)} (x^{t,i,j})\right|\\
&\leq 
\left|\sum_{i=0}^{s-1}\sum_{j=0}^{p-1}\nabla_{\sigma(i+1)} \Fm(x^{t,i,0})^\top  \nabla_{\sigma(i+1)} f_{\pi(j)} (x^{t,i,0}) -\sum_{i=0}^{s-1}\sum_{j=0}^{p-1}\nabla_{\sigma(i+1)} \Fm(x^{t,i,0})^\top  \nabla_{\sigma(i+1)} f_{\pi(j)} (x^{t,i,j})\right|\\
&+\left|\sum_{i=0}^{s-1}\sum_{j=0}^{p-1}\nabla_{\sigma(i+1)} \Fm(x^{t,i,0})^\top  \nabla_{\sigma(i+1)} f_{\pi(j)} (x^{t,i,j}) -\sum_{i=0}^{s-1}\sum_{j=0}^{p-1}\nabla_{\sigma(i+1)} \Fm(x^{t,i,j})^\top  \nabla_{\sigma(i+1)} f_{\pi(j)} (x^{t,i,j})\right|\\
&\leq 
\sum_{i=0}^{s-1}\sum_{j=0}^{p-1}\left\| \nabla_{\sigma(i+1)} \Fm(x^{t,i,0})\right\| \cdot \left\| \nabla_{\sigma(i+1)} f_{\pi(j)} (x^{t,i,0})  -\nabla_{\sigma(i+1)} f_{\pi(j)} (x^{t,i,j}) \right\|\\
&+\sum_{i=0}^{s-1}\sum_{j=0}^{p-1}\left\| \nabla_{\sigma(i+1)} \Fm(x^{t,i,0}) -\nabla_{\sigma(i+1)} \Fm(x^{t,i,j})\right\|\cdot \left\|  \nabla_{\sigma(i+1)} f_{\pi(j)} (x^{t,i,j}) \right\|\\
&\leq 
\sum_{i=0}^{s-1}\sum_{j=0}^{p-1}\sigma \Lphif \left\| x^{t,i,0} -x^{t,i,j}\right\|+\sum_{i=0}^{s-1}\sum_{j=0}^{p-1} \sigma \Lphif \left  \|x^{t,i,0}-x^{t,i,j}\right\| = 2\sigma \Lphif \sum_{i=0}^{s-1} \sum_{j=0}^{p-1}  \left  \|x^{t,i,0}-x^{t,i,j}\right\|,
\end{align*}
where we used the $\Lphif$-smoothness and bounded gradient of $\Fm$ and $f_{\pi(j)}$.
Taking expectation of both sides, we arrive at
\begin{align*}
    \mathbb{E}[A^t] &
    \leq 2\sigma \Lphif  \mathbb{E} \left[ \sum_{i=0}^{s-1} \sum_{j=0}^{p-1}  \left  \|x^{t,i,0}-x^{t,i,j}\right\|\right]= 2\sigma \Lphif   \sum_{i=0}^{s-1} \sum_{j=0}^{p-1}  \mathbb{E} \left[\left  \|x^{t,i,0}-x^{t,i,j}\right\|\right] \\
    &\leq 2\sigma \Lphif   \sum_{i=0}^{s-1} \sum_{j=0}^{p-1}  \mathbb{E} \left[\sum_{k=0}^{j-1} \left  \|x^{t,i,k+1}-x^{t,i,k}\right\|\right] \leq  2\sigma \Lphif  \sum_{i=0}^{s-1}  \sum_{j=0}^{p-1} j\alpha_t\sigma \leq  \alpha_t \Lphif \sigma^2 s p(p-1),
\end{align*}
where the last line follows from the triangle inequality and Lemma \ref{lem:bounds}(b).

Substituting $\mathbb{E}[A^t]$ into our main derivation~\eqref{eq:bound_non-convex} yields
\begin{align*}
\mathbb{E} [\Fm(x^{t,s,p})]
&\leq \mathbb{E} [\Fm(x^{t,0,0})] - \alpha_t \mathbb{E} \left[\sum_{i=0}^{s-1}\left\| \nabla_{\sigma(i+1)} \Fm(x^{t,i,0})  \right\|^2 \right]   + ps\alpha_t^2 \frac{\Lphif}{2} \sigma^2 + \alpha_t^2 \Lphif \sigma^2s p(p-1)\\
&\leq \mathbb{E} [\Fm(x^{t,0,0})]  -\frac{1}{2} \alpha_t \mathbb{E} \left[\sum_{i=0}^{s-1}\left\| \nabla_{\sigma(i+1)} \Fm(x^{t,0,0})  \right\|^2 \right]  \\
&+ \alpha_t \mathbb{E} \left[\sum_{i=0}^{s-1}\left\| \nabla_{\sigma(i+1)} \Fm(x^{t,i,0}) -\nabla_{\sigma(i+1)} \Fm(x^{t,0,0})  \right\|^2 \right]+ \alpha_t^2 \Lphif \sigma^2 s \left[p(p-1)+ \frac{p}{2}\right],
\end{align*}
where we used the inequality $-a^2 \leq -\frac{1}{2} b^2 +(a-b)^2$. Using the $\Lphif$-smooth property of $\Fm$ and recalling from Algorithm~\ref{alg_stoc} that~$x^{t+1,0,0} = x^{t,s,p}$, one obtains
\begin{align*}%\label{eq:main_bound}
\mathbb{E} [\Fm(x^{t+1,0,0})]
&\leq \mathbb{E} [\Fm(x^{t,0,0})]  -\frac{1}{2} \alpha_t \mathbb{E} \left[\left\| \nabla \Fm(x^{t,0,0})  \right\|^2 \right]  \nonumber \\
&+ \alpha_t \Lphif^2\mathbb{E} \left[\sum_{i=0}^{s-1}\left\|x^{t,i,0} - x^{t,0,0}\right\|^2 \right]+ \alpha_t^2 \Lphif \sigma^2 sp^2 \nonumber \\
&\leq \mathbb{E} [\Fm(x^{t,0,0})]  -\frac{1}{2} \alpha_t \mathbb{E} \left[\left\| \nabla \Fm(x^{t,0,0})  \right\|^2 \right]  \nonumber \\
&+ \alpha_t \Lphif^2 \sum_{i=0}^{s-1} \mathbb{E} \left[ip \sum_{k=0}^{i-1} \sum_{j=0}^{p-1} \left\|  x^{t,k,j+1} - x^{t,k,j}\right\|^2 \right]+ \alpha_t^2 \Lphif \sigma^2 sp^2 \nonumber \\
&\leq \mathbb{E} [\Fm(x^{t,0,0})]  -\frac{1}{2} \alpha_t \mathbb{E} \left[\left\| \nabla \Fm(x^{t,0,0})  \right\|^2 \right] +\alpha_t\Lphif^2 \sum_{i=0}^{s-1}  i^2p^2\alpha^2_t \sigma^2+ \alpha_t^2 \Lphif \sigma^2 p^2 \nonumber \\
&\leq \mathbb{E} [\Fm(x^{t,0,0})]  -\frac{1}{2} \alpha_t \mathbb{E} \left[\left\| \nabla \Fm(x^{t,0,0})  \right\|^2 \right]   + \alpha_t^2 \Lphif \sigma^2 p^2\left[s + \frac{\Lphif \alpha_t  s^3}{3}\right],
\end{align*}
where the second bound follows from the inequality $\|\sum_{i=1}^n a_i\|^2 \leq n\sum_{i=1}^n \| a_i\|^2 $ and the third bound follows from  Lemma \ref{lem:bounds}(b).
Rearranging the derived bound gives us
\begin{align*}
\mathbb{E} \left[\left\| \nabla \Fm(x^{t,0,0})  \right\|^2 \right] 
&\leq  \frac{2}{\alpha_t} \left(   \mathbb{E} [\Fm(x^{t,0,0})] - \mathbb{E} [\Fm(x^{t+1,0,0})] \right)  + 2\alpha_t \Lphif \sigma^2 p^2\left[s + \frac{\Lphif \alpha_t  s^3}{3}\right].
\end{align*}

Averaging for all $t$, one obtains
\begin{align*}
\frac{1}{T}\sum_{t=0}^{T-1} \mathbb{E} \left[\left\| \nabla \Fm(x^{t,0,0})  \right\|^2 \right] 
&\leq  \frac{1}{T}\sum_{t=0}^{T-1} \frac{2}{\alpha_t} \left(   \mathbb{E} [\Fm(x^{t,0,0})] - \mathbb{E} [\Fm(x^{t+1,0,0})] \right) \\
&+ \frac{1}{T}\sum_{t=0}^{T-1} 2\alpha_t \Lphif \sigma^2 p^2\left[s + \frac{\Lphif \alpha_t  s^3}{3}\right].
\end{align*}
Setting $\alpha_t = \frac{1}{\sqrt{T}}$ yields
\begin{align*}
\frac{1}{T}\sum_{t=0}^{T-1}  \mathbb{E} \left[\left\| \nabla \Fm(x^{t,0,0})  \right\|^2 \right] 
&\leq  \frac{2}{\sqrt{T}} \left(\mathbb{E} [\Fm(x^{0,0,0})- F^*]  + \Lphif \sigma^2 p^2\left[s + \frac{\Lphif s^3}{3}\right]\right),
\end{align*}
which concludes the proof.  
\end{proof}

\section{Proof of the convex case}\label{sec:convex}
We present the proof of Theorem~\ref{thm:convex} as follows.
\vspace{\baselineskip}

\begin{proof} For every iteration $t, i, j$, applying  Equation~\eqref{eq:alg_update} yields
\begin{align*}
    \|x^{t,i,j+1} - x^*\|^2 
    &= \|(x^{t,i,j}- x^*)  + (x^{t,i,j+1} - x^{t,i,j})\|^2 \\
    &= \|x^{t,i,j}- x^*\|^2 +2 (x^{t,i,j}- x^*)^\top(x^{t,i,j+1} - x^{t,i,j}) + \|x^{t,i,j+1} - x^{t,i,j}\|^2\\
    &= \|x^{t,i,j}- x^*\|^2 -2\alpha_t (x^{t,i,j}- x^*)^\top( U_{\sigma(i+1)} \nabla_{\sigma(i+1)} g_{\pi(j)} (x^{t,i,j}, \xi^{t,i,j})) \\
    &+ \alpha_t^2\|U_{\sigma(i+1)} \nabla_{\sigma(i+1)} g_{\pi(j)} (x^{t,i,j}, \xi^{t,i,j})\|^2.
\end{align*}
Applying expectation with respect to $\mathcal{F}_{t,i,j}$ on both sides of the previous bound  and using Assumption \ref{assump:conds}, we obtain
\begin{align*}
\mathbb{E}[\|x^{t,i,j+1} - x^*\|^2 | \mathcal{F}_{t,i,j}]
    &\leq \|x^{t,i,j}- x^*\|^2 -2\alpha_t (x^{t,i,j}- x^*)^\top( U_{\sigma(i+1)} \nabla_{\sigma(i+1)} f_{\pi(j)} (x^{t,i,j})) + \alpha_t^2 \sigma^2.
\end{align*}

Taking total expectation of the previous bound and applying the inequality recursively for all $i,j$, one has
\begin{align}\label{eq:bound_convex}
&\mathbb{E}[\|x^{t,s,p} - x^*\|^2]\nonumber \\
    &\leq \mathbb{E}[\|x^{t,0,0}- x^*\|^2] - 2\alpha_t\sum_{i=0}^{s-1}\sum_{j=0}^{p-1} \mathbb{E}[(x^{t,i,j}- x^*)^\top U_{\sigma(i+1)} \nabla_{\sigma(i+1)} f_{\pi(j)} (x^{t,i,j}) ] + sp\alpha_t^2 \sigma^2\nonumber \\
    &\leq \mathbb{E}[\|x^{t,0,0}- x^*\|^2] - 2\alpha_t p\mathbb{E}[(x^{t,0,0}- x^*)^\top(\nabla \Fm(x^{t,0,0}) )] + sp\alpha_t^2 \sigma^2
    % & + 2\alpha_t p\mathbb{E}[(x^{t,0,0}- x^*)^\top(\nabla \Fm(x^{t,0,0}) )]- 2\alpha_t\sum_{i=0}^{s-1}\sum_{j=0}^{p-1} \mathbb{E}[(x^{t,i,j}- x^*)^\top U_{\sigma(i+1)} \nabla_{\sigma(i+1)} f_{\pi(j)} (x^{t,i,j}) ]
% \end{align*}
+2\alpha_t \mathbb{E}[B^t], 
\end{align}
where $B^t$ is the following term: 
\begin{align*}
     &\left|p(x^{t,0,0}- x^*)^\top(\nabla \Fm(x^{t,0,0}) )- \sum_{i=0}^{s-1}\sum_{j=0}^{p-1} (x^{t,i,j}- x^*)^\top U_{\sigma(i+1)} \nabla_{\sigma(i+1)} f_{\pi(j)} (x^{t,i,j}) \right|.
\end{align*}
Let us analyze the first term in $B^t$ using the definitions of $U_i$ and $\Fm$, writing % $U_i$ in~\eqref{eq:def_Ui}. 
\begin{align*}
     &p(x^{t,0,0}- x^*)^\top(\nabla \Fm(x^{t,0,0}) ) =(x^{t,0,0}- x^*)^\top\left(p\sum_{i=0}^{s-1} U_{\sigma(i+1)} \nabla_{\sigma(i+1)} \Fm (x^{t,0,0}) \right)\\ 
     &= (x^{t,0,0}- x^*)^\top\left(\sum_{i=0}^{s-1}\sum_{j=0}^{p-1} U_{\sigma(i+1)} \nabla_{\sigma(i+1)} f_{\pi(j)} (x^{t,0,0})\right)\\
     &=\sum_{i=0}^{s-1}\sum_{j=0}^{p-1} (x^{t,0,0}- x^*)^\top U_{\sigma(i+1)} \nabla_{\sigma(i+1)} f_{\pi(j)} (x^{t,0,0}).
\end{align*}
% Note that 
% \begin{align*}
%      &\sum_{i=0}^{s-1}\sum_{j=0}^{p-1} (x^{t,0,0}- x^*)^\top U_{\sigma(i+1)} \nabla_{\sigma(i+1)} f_{\pi(j)} (x^{t,0,0}) 
%      =  (x^{t,0,0}- x^*)^\top\left(\sum_{i=0}^{s-1}\sum_{j=0}^{p-1} U_{\sigma(i+1)} \nabla_{\sigma(i+1)} f_{\pi(j)} (x^{t,0,0}) )\right)\\
%      &=  (x^{t,0,0}- x^*)^\top\left(p\sum_{i=0}^{s-1} U_{\sigma(i+1)} \nabla_{\sigma(i+1)} \Fm (x^{t,0,0}) )\right) = p(x^{t,0,0}- x^*)^\top(\nabla \Fm(x^{t,0,0}) ),
% \end{align*}
% where the last line follows from the definition of $U_i$ in~\eqref{eq:def_Ui}.
As a result, $B^t$ is upper bounded by
\begin{align*}
     &\left|\sum_{i=0}^{s-1}\sum_{j=0}^{p-1} (x^{t,0,0}- x^*)^\top U_{\sigma(i+1)} \nabla_{\sigma(i+1)} f_{\pi(j)} (x^{t,0,0}) - \sum_{i=0}^{s-1}\sum_{j=0}^{p-1} (x^{t,0,0}- x^*)^\top U_{\sigma(i+1)} \nabla_{\sigma(i+1)} f_{\pi(j)} (x^{t,i,j}) \right|\\  
    &+\left|\sum_{i=0}^{s-1}\sum_{j=0}^{p-1} (x^{t,0,0}- x^*)^\top U_{\sigma(i+1)} \nabla_{\sigma(i+1)} f_{\pi(j)} (x^{t,i,j}) - \sum_{i=0}^{s-1}\sum_{j=0}^{p-1} (x^{t,i,j}- x^*)^\top U_{\sigma(i+1)} \nabla_{\sigma(i+1)} f_{\pi(j)} (x^{t,i,j}) \right|\\
    &\leq\sum_{i=0}^{s-1}\sum_{j=0}^{p-1}  \left\|x^{t,0,0}- x^* \right\| \cdot \left\|U_{\sigma(i+1)} \nabla_{\sigma(i+1)} f_{\pi(j)} (x^{t,0,0}) -U_{\sigma(i+1)} \nabla_{\sigma(i+1)} f_{\pi(j)} (x^{t,i,j}) \right\|\\  
    &+\sum_{i=0}^{s-1}\sum_{j=0}^{p-1}\left\|x^{t,0,0} -x^{t,i,j}\right\|\cdot\left\| U_{\sigma(i+1)} \nabla_{\sigma(i+1)} f_{\pi(j)} (x^{t,i,j}) \right\|\\
    &\leq\sum_{i=0}^{s-1}\sum_{j=0}^{p-1}  \diam \cdot \Lphif \left \|x^{t,0,0} -x^{t,i,j}\right\|+\sum_{i=0}^{s-1}\sum_{j=0}^{p-1}\left\|x^{t,0,0} -x^{t,i,j}\right\| \cdot \sigma,
\end{align*}
where the last line follows from the $\Lphif$-smoothness and bounded gradient properties of $f_{\pi(j)}$, and the fact that $\left\|x^{t,0,0}- x^* \right\| \leq \diam$.
Applying expectation on both sides of the previous bound, one obtains
\begin{align*}
    &\mathbb{E}[B^t] \leq  (\diam \Lphif +  \sigma) \mathbb{E}\left[\sum_{i=0}^{s-1}\sum_{j=0}^{p-1}  \left \|x^{t,0,0} -x^{t,i,j}\right\|\right] \\
    % &= (\diam \Lphif +  \sigma) \sum_{i=0}^{s-1}\sum_{j=0}^{p-1} \mathbb{E}\left[  \left \|x^{t,0,0} -x^{t,i,0}\right\| +   \left \|x^{t,i,0} -x^{t,i,j}\right\|\right]\\
    &\leq (\diam \Lphif +  \sigma) \sum_{i=0}^{s-1}\sum_{j=0}^{p-1} \left[  (ip + j) \alpha_t\sigma \right]
    = \frac{1}{2}\alpha_t\sigma (\diam \Lphif +  \sigma) \left(s(s-1) p^2 + p^2 s\right) = \frac{1}{2}\alpha_t\sigma (\diam \Lphif +  \sigma) s^2p^2, 
\end{align*}
where the last line follows from the triangle inequality and Lemma \ref{lem:bounds}(b).

Substituting $\mathbb{E}[B^t]$ into our main derivation~\eqref{eq:bound_convex} give us
\begin{align*}
\mathbb{E}[\|x^{t,s,p} - x^*\|^2]
    &\leq \mathbb{E}[\|x^{t,0,0}- x^*\|^2]\\
    &- 2\alpha_t p\mathbb{E}[(x^{t,0,0}- x^*)^\top(\nabla \Fm(x^{t,0,0}) )] + \alpha^2_t sp \sigma[\sigma+  (\diam \Lphif +  \sigma) sp].
\end{align*}
As $f_j$ are convex for $j \in \{ 1, \dots, q\}$, $\Fm$ is convex, and we have 
\begin{align*}
\nabla  \Fm (x^{t,0,0}) ^\top \left(x^{t,0,0} - x^*\right)  \geq \Fm(x^{t,0,0}) - F^*. 
\end{align*}
Applying this bound and $x^{t+1,0,0} = x^{t,s,p}$ yield 
\begin{align*}
\mathbb{E}[\|x^{t+1,0,0} - x^*\|^2]
    &\leq \mathbb{E}[\|x^{t,0,0}- x^*\|^2] - 2\alpha_t p\mathbb{E}[\Fm(x^{t,0,0}) - F^*] + \alpha^2_t sp \sigma[\sigma+  (\diam \Lphif +  \sigma) sp].
\end{align*}

Using the last inequality recursively for all $t$ and rearranging, we obtain 
\begin{align*}
\sum_{t=0}^{T-1} 2\alpha_t p\mathbb{E}[\Fm(x^{t,0,0}) - F^*] 
    &\leq \|x^{0,0,0}- x^*\|^2 - \mathbb{E}[\|x^{T,0,0} - x^*\|^2] +\sum_{t=0}^{T-1} \alpha^2_t sp \sigma[\sigma+  (\diam \Lphif +  \sigma) sp]. 
\end{align*}
Choosing $\alpha_t = \frac{1}{\sqrt{T}}$ results in
\begin{align*}
\frac{1}{T} \sum_{t=0}^{T-1}  \mathbb{E}[\Fm(x^{t,0,0}) - F^*] 
    &\leq \frac{\|x^{0,0,0}- x^*\|^2 + sp \sigma[\sigma+  (\diam \Lphif +  \sigma) sp]}{2p\sqrt{T}} . 
\end{align*}
As $\Fm$ is convex and 
$\Fm(\bar{x})\leq  \frac{1}{T}\sum_{t=0}^{T-1}\Fm (x^{t,0,0})$, we have the desired bound. 

% \begin{align*}
% \Fm(x^{t,i,j+1}) &\leq \Fm(x^{t,i,j}) + \nabla \Fm(x^{t,i,j})^\top (x^{t,i,j+1} - x^{t,i,j}) + \frac{\Lphif}{2} \|x^{t,i,j+1} - x^{t,i,j}\|^2\\
% &\leq \Fm(x^{t,i,j}) - \alpha_t \nabla \Fm(x^{t,i,j})^\top \left(  U_{\sigma(i+1)} \nabla_{\sigma(i+1)} g_{\pi(j)} (x^{t,i,j}, \xi^{t,i,j}) \right)  \\
% &+ \alpha_t^2 \frac{\Lphif}{2} \left\|   U_{\sigma(i+1)} \nabla_{\sigma(i+1)} g_{\pi(j)} (x^{t,i,j}, \xi^{t,i,j})\right\|^2\\
% &\leq \Fm(x^{t,i,j}) - \alpha_t  \nabla_{\sigma(i+1)} \Fm(x^{t,i,j})^\top \left(  \nabla_{\sigma(i+1)} g_{\pi(j)} (x^{t,i,j}, \xi^{t,i,j}) \right)  \\
% &+ \alpha_t^2 \frac{\Lphif}{2} \left\| \nabla_{\sigma(i+1)} g_{\pi(j)} (x^{t,i,j}, \xi^{t,i,j})\right\|^2,
% \end{align*}

% \begin{align*}
%     \|x^{t,i,j} - x^*\|^2 
%     &= \|(x^{t,i,j+1}- x^*)  - (x^{t,i,j+1} - x^{t,i,j})\|^2 \nonumber\\
%     &= \|x^{t,i,j+1}- x^*\|^2 -2 (x^{t,i,j+1}- x^*)^\top(x^{t,i,j+1} - x^{t,i,j}) + \|x^{t,i,j+1} - x^{t,i,j}\|^2.
% \end{align*}
% \begin{align*}
%     \alpha_t(\Fm(x^{t,i,j}) - F^*) \leq \alpha_t \nabla  \Fm (x^{t,i,j}) ^\top \left(x^{t,i,j} - x^*\right)   , 
% \end{align*}

\end{proof}

\bibliography{ref}
\end{document}